\newcommand{\N}{\mathbb{N}}
\newcommand{\R}{\mathbb{R}}
\newcommand{\C}{\mathbb{C}}
\newcommand{\D}{\mathbb{C}}
\newcommand{\error}{\mathcal{O}}
\newcommand{\taylor}[1]{\mathcal{T}(#1)}
\title{Smooth normal forms for integrable hamiltonian systems near a
  focus-focus singularity}
\author{San V\~u Ng\d{o}c , Christophe Wacheux\\
Université de Rennes 1}
\begin{document}

\maketitle

\begin{abstract}
  We prove that completely integrable systems are normalisable in the
  $\Cinf$ category near focus-focus singularities.
\end{abstract}

\section{Introduction and exposition of the result}

In his PhD Thesis~\cite{eliasson-these}, Eliasson proved some very
important results about symplectic linearisation of completely
integrable systems near non-degenerate singularities, in the $\Cinf$
category. However, at that time the so-called elliptic singularities
were considered the most important case, and the case of focus-focus
singularities was never published. It turned out that focus-focus
singularities became crucially important in the last 15 years in the
topological, symplectic, and even quantum study of Liouville
integrable systems. The aim of this article is to fill in some
non-trivial gaps in the original treatment, in order to provide the
reader with a complete and robust proof of the fact that a $\Cinf$
completely integrable system is linearisable near a focus-focus
singularity.

Note that in the holomorphic category, the result is already well
established (see Vey~\cite{vey}).

\ouf

Let us first recall the result.

Thoughout the paper, we shall denote by $(q_1,q_2)$ the quadratic
focus-focus model system on $\RM^4=T^*\RM^2$ equipped with the
canonical symplectic form $\omega_0:=d\xi_1\wedge dx_1 + d\xi_2\wedge
dx_2$~:
\begin{equation}
  q_1 := x_1
  \xi_1 + x_2 \xi_2 \quad \text{ and } \quad q_2 := x_1 \xi_2 - x_2
  \xi_1.
  \label{equ:focus-focus}
\end{equation}

Let $f_1,f_2$ be $\Cinf$ functions on a 4-dimensional symplectic
manifold $M$, such that $\{f_1,f_2\}=0$. Here the bracket
$\{\cdot,\cdot\}$ is the Poisson bracket induced by the symplectic
structure. We assume that the differentials $df_1$ and $df_2$ are
independent almost everywhere on $M$. Thus $(f_1,f_2)$ is a completely
(or ``Liouville'') integrable system.

If $m\in M$ is a critical point for a function $f\in\Cinf(M)$, we
denote by $\mathcal{H}_m(f)$ the Hessian of $f$ at $m$, which we view
as a quadratic form on the tangent space $T_mM$.

\begin{defi} For $F=(f_1,f_2)$ an integrable system on a symplectic
  4-manifold $M$, $m$ is a \textbf{critical point of focus-focus type}
  if
  \begin{itemize}
  \item $dF(m)=0$;
  \item the Hessians $\mathcal{H}_m(f_1)$ and $\mathcal{H}_m(f_2)$ are
    linearly independent;
  \item there exist canonical symplectic coordinates on $T_mM$ such
    that these hessians are linear combinations of the focus-focus
    quadratic forms $q_1$ and $q_2$.
  \end{itemize}
\end{defi}
Concretely, this definition amounts to requiring the existence of a
linear symplectomorphism $U:\RM^4\fleche T_mM$ such that~:

\[ U^*(\mathcal{H}_m(F)) = \begin{pmatrix} a q_1 + b q_2 \\ c q_1 + d
  q_2 \end{pmatrix} \; \text{ with } \; \begin{pmatrix} a & b \\ c &
  d \end{pmatrix} \in GL_2(\R). \] From a dynamical viewpoint, this
implies that there exists $(\alpha,\beta)\in\RM^2$, such that the
linearization at $m$ of the hamiltonian vector field associated to the
hamiltonian $\alpha f_1 + \beta f_2$ has four distinct complex
eigenvalues. Thus the Lie algebra spanned by the Hessians of $f_1$ and
$f_2$ is generic (open and dense) within 2-dimensional abelian Lie algebras of
quadratic forms on $(T_mM,\{\cdot,\cdot\})$. This is the non-degeneracy condition 
as defined by Williamson~\cite{williamson}.

The purpose of this paper is to give a complete proof of the following
theorem, which was stated in~\cite{eliasson-these}.
\begin{theo}
  \label{theo-principal}

  Let $(M,\omega)$ be a symplectic 4-manifold, and $F=(f_1,f_2)$ an
  integrable system on $M$ ( \emph{ie.} $\{f_1,f_2\} =0$). Let $m$ be
  a non-degenerate critical point of $F$ of focus-focus type.

  Then there exist a local symplectomorphism $\Psi: (\RM^4,\omega_0)
  \to (M,\omega)$, defined near the origin, and sending the origin to
  the point $m$, and a local diffeomorphism $\tilde{G}: \RM^2\to \RM^2$,
  defined near $0$, and sending $0$ to $F(m)$, such that
  \[ F\circ \Psi = \tilde{G}(q_1,q_2)
  \]
\end{theo}

The geometric content of this normal form theorem becomes clear if,
given any completely integrable system $F$, one considers the
\emph{singular foliation} defined by the level sets of $F$. Thus, the
theorem says that the foliation defined by $F$ may, in suitable
symplectic coordinates, be made equal to the foliation given by the
\emph{quadratic part} of $F$. With this in mind, the theorem can be
viewed as a ``symplectic Morse lemma for singular lagrangian
foliations''.

The normal form $\tilde{G}$ and the normalization $\Psi$ are not
unique. However, the degrees of liberty are well understood. We cite
the following results for the reader's interest, but they are not used
any further in this article.

\begin{theo}[\cite{san-semi-global}]
  If $\phy$ is a local symplectomorphism of $(\RM^4,0)$ preserving the
  focus-focus foliation $\{q:=(q_1,q_2)=\textrm{const}\}$ near the
  origin, then there exists a unique germ of diffeomorphism
  $G:\RM^2\fleche\RM^2$ such that
  \begin{equation}
    q\circ \phy = G\circ q,
  \end{equation}
  and $G$ is of the form $G=(G_1,G_2)$, where
  $G_2(c_1,c_2)=\varepsilon_2 c_2$ and $G_1(c_1,c_2)-\varepsilon_1c_1$
  is flat at the origin, with $\varepsilon_j=\pm1$.
\end{theo}

\begin{theo}[\cite{miranda-zung}]
\label{theo:miranda_zung}
  If $\phy$ is a local symplectomorphism of $(\RM^4,0)$ preserving the
  map $q=(q_1,q_2)$, then $\phy$ is the composition $A\circ \chi$
  where $A$ is a linear symplectomorphism preserving $q$ and $\chi$ is
  the time-1 flow of a smooth function of $(q_1,q_2)$.
\end{theo}

\subsection{ The formulation in complex variables }

Since Theorem~\ref{theo-principal} is a local theorem, we can always
invoke the Darboux theorem and formulate it in local coordinates
$(x_1,\xi_1,x_2,\xi_2)$. Throughout the whole paper, we will switch
whenever necessary to complex coordinates. But here, the complex
coordinates are not defined in the usual way $z = x + i\xi$ but
instead we set $z_1 := x_1 + i x_2$ and $z_2 := \xi_1 + i\xi_2$. We
set then :

\[\label{def-der_compl1}
\frac{\partial }{\partial z_1} := \frac{1}{2} \left(
  \frac{\partial}{\partial x_1} - i \frac{\partial }{\partial x_2}
\right) \ \; , \quad \frac{\partial}{\partial \bar{z_1}} :=
\frac{1}{2} \left( \frac{\partial}{\partial x_1} + i \frac{\partial
  }{\partial x_2} \right) \]

\[\label{def-der_compl2}
\frac{\partial }{\partial z_2} := \frac{1}{2} \left(
  \frac{\partial}{\partial \xi_1} - i \frac{\partial }{\partial \xi_2}
\right) \ \;, \quad \frac{\partial}{\partial \bar{z_2}} := \frac{1}{2}
\left( \frac{\partial}{\partial \xi_1} + i \frac{\partial }{\partial
    \xi_2} \right) \]

Introducing such notation is justified by the next properties :

\begin{prop}
  \begin{itemize}
  \item $q := q_1 + i q_2 = \overline{z}_1 z_2$.
  \item The hamiltonian flows of $q_1$ and $q_2$ in these variables
    are
    \begin{equation}
      \label{eq-flow_C} \varphi_{q_1}^t : (z_1,z_2) \mapsto (e^t z_1,
      e^{-t} z_2) \; \text{ and } \; \varphi_{q_2}^s : (z_1,z_2) \mapsto
      (e^{is} z_1, e^{is} z_2)
    \end{equation}

  \item In complex coordinates, the Poisson bracket for real-valued
    functions is :
    \[ \{f,g\} = 2 \left( -\frac{\partial f}{\partial \bar{z_1}}
      \frac{\partial g}{\partial z_2} + \frac{\partial f}{\partial
        z_2}\frac{\partial g}{\partial \bar{z_1}} - \frac{\partial
        f}{\partial z_1} \frac{\partial g}{\partial \bar{z_2}} +
      \frac{\partial f}{\partial \bar{z_2}} \frac{\partial g}{\partial
        z_1} \right) \]
  \end{itemize}
\end{prop}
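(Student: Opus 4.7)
The plan is to handle each of the three items by a direct computation from the definitions; none requires any new idea.

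For the first item, I would expand $\bar{z}_1 z_2 = (x_1 - ix_2)(\xi_1 + i\xi_2)$ and collect real and imaginary parts, reading off $(x_1\xi_1 + x_2\xi_2) + i(x_1\xi_2 - x_2\xi_1) = q_1 + iq_2$.

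For the second item, I would write Hamilton's equations in real coordinates. The flow of $q_1 = x_1\xi_1 + x_2\xi_2$ decouples into the four scalar equations $\dot{x}_j = x_j$, $\dot{\xi}_j = -\xi_j$, giving $(x_j,\xi_j)\mapsto(e^{t}x_j, e^{-t}\xi_j)$; combining with $z_1 = x_1+ix_2$ and $z_2 = \xi_1+i\xi_2$ yields $(z_1,z_2)\mapsto(e^{t}z_1, e^{-t}z_2)$. The flow of $q_2 = x_1\xi_2 - x_2\xi_1$ mixes the two $x$ components and the two $\xi$ components, but an immediate calculation shows that the complex combinations satisfy $\dot{z}_1 = iz_1$ and $\dot{z}_2 = iz_2$, from which the announced formula $(e^{is}z_1, e^{is}z_2)$ follows.

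For the third item, I would invert the definitions to express the real partial derivatives in terms of the complex ones: $\partial_{x_1} = \partial_{z_1} + \partial_{\bar{z}_1}$, $\partial_{x_2} = i(\partial_{z_1} - \partial_{\bar{z}_1})$, and analogously for the $\xi$'s. Substituting into the standard real-coordinate Poisson bracket expands each product $\partial_{x_j}f\,\partial_{\xi_j}g$ into four monomials in the complex derivatives; when one adds the $j=1$ and $j=2$ contributions, the factor $i^2 = -1$ in the $j=2$ term makes the ``pure-holomorphic'' monomial $\partial_{z_1}f\,\partial_{z_2}g$ and the ``pure-antiholomorphic'' monomial $\partial_{\bar{z}_1}f\,\partial_{\bar{z}_2}g$ cancel, while the two mixed monomials $\partial_{z_1}f\,\partial_{\bar{z}_2}g$ and $\partial_{\bar{z}_1}f\,\partial_{z_2}g$ double. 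The symmetric treatment of $\partial_{\xi_j}f\,\partial_{x_j}g$ then assembles the four surviving pieces into exactly the announced formula, with the overall factor of $2$.

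The only delicate point is the sign- and $i$-bookkeeping in this last expansion. A short table of the eight pre-cancellation monomials makes the collapse onto four mixed terms transparent; the global sign is then fixed by the sign convention chosen for the Poisson bracket on the real side, which one can verify once and for all by evaluating the proposed formula on the test case $\{x_1,\xi_1\}$.
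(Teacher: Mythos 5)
Your computations are correct and complete: the expansion of $\overline{z}_1 z_2$, the Hamilton equations $\dot{x}_j=x_j$, $\dot{\xi}_j=-\xi_j$ for $q_1$ and $\dot{z}_1=iz_1$, $\dot{z}_2=iz_2$ for $q_2$, and the substitution of $\partial_{x_1}=\partial_{z_1}+\partial_{\bar z_1}$, $\partial_{x_2}=i(\partial_{z_1}-\partial_{\bar z_1})$ (and likewise in $\xi$) into the real bracket all check out against the paper's conventions, with exactly the cancellation of the pure-holomorphic and pure-antiholomorphic terms and doubling of the mixed ones that you describe. The paper states this proposition without proof, and your direct verification (including fixing the overall sign on the test case $\{x_1,\xi_1\}$) is precisely the routine argument it leaves implicit.
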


\section{Birkhoff normal form for focus-focus singularities}

In this section we show \ref{theo-principal} in a formal context (i.e. : with
formal series instead of functions), and use the formal result to
solve the problem modulo a flat function. For people familiar with
Birkhoff normal forms, we compute here simultaneously the Birkhoff
normal forms of 2 commuting hamiltonians.

\subsection{Formal series}

We consider the space $\RM\formel{x_1,x_2,\xi_1,\xi_2}$ of formal
power expansions in $x_1,x_2,\xi_1,\xi_2$. We recall that this is a
topological space for the $U$-adic topology, where $U$ is the maximal
ideal generated by the variables.

If $\mathring{f}\in \RM\formel{x_1,x_2,\xi_1,\xi_2}$, we write
\[ \mathring{f} = \sum_{N=0}^{+\infty} \mathring{f}^{N} \text{ , with
} \mathring{f}^N = \sum_{i+j+k+l=N} \mathring{f}_{ijkl} \ x_1^i
\xi_1^j x_2^k \xi_2^l \; \text{ and } \mathring{f}_{ijkl} \in \R \]

For $f\in \mathcal{C}^{\infty} (\R^{4},0) $,
$\taylor{f}\in\RM\formel{x_1,x_2,\xi_1,\xi_2}$ designates the Taylor
expansion of $f$ at 0.
We have the following definitions

\begin{defi}
  $\mathring{\error} (N):= \{ \mathring{f} \in
  \R\formel{x_1,x_2,\xi_1,\xi_2} \ | \quad \mathring{f}_{ijkl} = 0,\
  \forall i+j+k+l < N \}$
\end{defi}

\begin{defi}
  \label{de-plate}
  $f\in \mathcal{C}^{\infty} (\R^{4},0) $ is $\error (N)$ (note the
  difference with the previous definition) if one of the 3 equivalent
  conditions is fulfilled :

  \begin{enumerate}
  \item $f$ and all its derivatives of order $< N$ at 0 are 0.

  \item There exists a constant $C_N>0$ such that, in a neighbourhood
    of the origin,
    \begin{equation}
      \abs{f(x_1,x_2,\xi_1,\xi_2)} \leq C_N (x_1^2+x_2^2 +\xi_1^2 +
      \xi_2^2)^{N/2}.
      \label{equ:estimate}    
    \end{equation}

  \item $\taylor{f} \in \mathring{\error} (N)$.
  \end{enumerate}
\end{defi}
The equivalence of the above conditions is a consequence of the Taylor
expansion of $f$. Recall, however, that if $f$ were not supposed to be
smooth at the origin, then the estimates~\eqref{equ:estimate} alone
would not be sufficient for implying the smoothness of $f$.
\begin{defi}
  $f\in \mathcal{C}^{\infty} (\R^{4},0) $ is \emph{flat} at the origin
  or $\error (\infty)$ if for all $N \in \N$, it is $\error(N)$. Its
  Taylor expansion is equally zero as a formal series.
\end{defi}

Smooth functions can be flat and yet non-zero in a neighbourhood of
0. The most classical example is the function $x \mapsto
\exp(-1/x^2)$, which is in fact used in some proofs of the following
Borel lemma.

\begin{lemm}[\cite{borel}]
  \label{lemma-Borel}
  Let $\mathring{f}\in \mathcal{C}^{\infty}(\R^n;\R)
  \formel{X_1,\dots,X_\ell}$.  Then there exists a function $\tilde{f}
  \in \mathcal{C}^{\infty}(\RM^{\ell+n})$ whose Taylor expansion in
  the $(x_1,\dots,x_\ell)$ variables is $\mathring{f}$. This function
  is unique modulo the addition of a function that is flat in the
  $(x_1,\dots,x_\ell)$ variables.
\end{lemm}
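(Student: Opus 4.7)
The plan is to prove existence by the standard construction of Borel: express the formal series as $\mathring{f} = \sum_{\alpha\in\N^{\ell}} a_{\alpha}(y)\,x^{\alpha}/\alpha!$ where $y=(y_1,\dots,y_n)$, $a_{\alpha}\in\Cinf(\RM^{n})$, and then build the candidate $\tilde f$ by truncating each monomial with a smooth cutoff whose scale depends on $\alpha$. Concretely, fix $\chi\in\Cinf(\RM,[0,1])$ with $\chi\equiv 1$ near $0$ and $\operatorname{supp}\chi\subset[-1,1]$, choose a sequence of real numbers $\lambda_{\alpha}\geq 1$ (to be determined), and set
\[
\tilde f(y,x) \;:=\; \sum_{\alpha\in\N^{\ell}} \chi\!\bigl(\lambda_{\alpha}\,\|x\|\bigr)\,a_{\alpha}(y)\,\frac{x^{\alpha}}{\alpha!}.
\]
The cutoff guarantees that only finitely many terms are nonzero near $x=\infty$ in $x$, but that is not the issue; what matters is behaviour near $x=0$, where all terms are active.

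The second step is the quantitative choice of the $\lambda_{\alpha}$. Fix an exhausting sequence of compacts $K_{p}\subset\RM^{n+\ell}$. On $K_{p}$ I bound, for each multi-index $(\beta,\gamma)$ with $|\beta|+|\gamma|\le|\alpha|-1$, the quantity
\[
M_{\alpha,p} \;:=\; \sup_{(y,x)\in K_{p}}\ \sup_{|\beta|+|\gamma|\le|\alpha|-1}\ \Bigl|\partial_{y}^{\beta}\partial_{x}^{\gamma}\bigl(a_{\alpha}(y)\,x^{\alpha}/\alpha!\bigr)\Bigr|.
\]
On the support of $\chi(\lambda_{\alpha}\|x\|)$ one has $\|x\|\le 1/\lambda_{\alpha}$, so each time a derivative in $x$ falls on $\chi$ one picks up a factor of $\lambda_{\alpha}$ but loses a factor of $\|x\|\le 1/\lambda_{\alpha}$ by differentiating one less on $x^{\alpha}$; hence, choosing $\lambda_{\alpha}\ge 2^{|\alpha|}(1+M_{\alpha,|\alpha|})$, every derivative of order $<|\alpha|$ of the $\alpha$-term is bounded by $2^{-|\alpha|}$ on $K_{p}$ for every $p\le|\alpha|$. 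A standard argument with Weierstrass M-test then yields that the series, together with all its term-by-term derivatives, converges uniformly on compacts, so $\tilde f\in\Cinf(\RM^{n+\ell})$ and
\[
\partial_{y}^{\beta}\partial_{x}^{\gamma}\tilde f(y,0) \;=\; \partial_{y}^{\beta}\partial_{x}^{\gamma}\Bigl(a_{\gamma}(y)\,x^{\gamma}/\gamma!\Bigr)\Bigr|_{x=0} \;=\; \partial_{y}^{\beta}a_{\gamma}(y),
\]
the cross-terms vanishing either by the cutoff being equal to $1$ or by the factor $x^{\alpha}$ evaluated at $0$. This identifies the Taylor expansion of $\tilde f$ in the $x$-variables with $\mathring f$.

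For uniqueness, suppose $\tilde f_{1}$ and $\tilde f_{2}$ are two such functions. Their difference $g:=\tilde f_{1}-\tilde f_{2}$ lies in $\Cinf(\RM^{n+\ell})$ and all its $\partial_{x}$-derivatives at $x=0$ vanish for every fixed $y$; by the $x$-Taylor remainder formula with parameters, this is exactly what it means for $g$ to be flat in the $x$-variables, so any two choices differ by such a flat function.

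The main obstacle is purely the technical one in the second paragraph: one must choose the $\lambda_{\alpha}$ so large that the formal series, after smooth truncation, yields a genuinely smooth function whose Taylor expansion at $x=0$ is term-by-term equal to the original formal series. All of the subtlety of Borel's lemma is concentrated in that competition between the growth of $\lambda_{\alpha}$ (needed to kill the terms and their derivatives) and the constraint that $\chi(\lambda_{\alpha}\|x\|)\equiv 1$ on a neighbourhood of $0$ whose size depends on $\alpha$.
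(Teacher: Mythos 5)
The paper does not prove this lemma at all: it is quoted as a classical result with a citation, and nothing in the rest of the article depends on how it is proved. Your argument is the standard Borel construction with parameters (smooth cutoffs $\chi(\lambda_\alpha\|x\|)$ with $\lambda_\alpha$ growing fast enough, chosen against suprema of the coefficients and their derivatives on an exhaustion by compacts), and it is essentially correct: the exhaustion trick with $M_{\alpha,|\alpha|}$ is exactly what makes the lemma work for coefficients $a_\alpha\in\Cinf(\RM^n)$ that are not globally bounded, and the identification of the $x$-Taylor coefficients at $x=0$ and the uniqueness statement are handled correctly. Two small points of hygiene: the explicit threshold $\lambda_\alpha\geq 2^{|\alpha|}(1+M_{\alpha,|\alpha|})$ should also absorb the constants coming from the derivatives of $\chi$ and the Leibniz factors (harmless, one simply takes $\lambda_\alpha$ larger, e.g.\ with a factor $C_{|\alpha|}$ depending on $\sup|\chi^{(j)}|$, $j\leq|\alpha|$), and the smallness mechanism really only requires that the number of $x$-derivatives be at most $|\alpha|-1$ (so that one factor of $x$, hence of $1/\lambda_\alpha$, survives on the support of the cutoff); your stronger restriction $|\beta|+|\gamma|\leq|\alpha|-1$ is sufficient for the uniform-convergence argument, since on a fixed compact and for a fixed derivative order only finitely many terms escape it. Finally, $\chi(\lambda_\alpha\|x\|)$ is smooth despite the non-smoothness of $\|x\|$ at the origin because $\chi\equiv 1$ near $0$ (or one can use $\|x\|^2$); this is worth a word but does not affect the proof.
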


We define the Poisson bracket for formal series the same way we do in
the smooth context : for $A,B \in \R\formel{x_1,x_2,\xi_1,\xi_2}$,
\[
\{A,B\} = \sum_{i=1}^n \dfrac{\partial A}{\partial \xi_i} \
\dfrac{\partial B}{\partial x_i} - \dfrac{\partial A}{\partial x_i} \
\dfrac{\partial B}{\partial \xi_i}.
\]

The same notation will designate the smooth and the formal bracket,
depending on the context. We have that Poisson bracket commutes with
taking Taylor expansions~: for formal series $A,B$,
\begin{equation}
  \{\taylor{A},\taylor{B}\} = \taylor{\{A,B\}}.
  \label{equ:taylor-poisson}
\end{equation}

From this we deduce, if we denote $\mathcal{D}_N $ the subspace of
homogeneous polynomials of degree $N$ in the variables
$(x_1,x_2,\xi_1,\xi_2)$:

\[ \{ \error(N),\error(M)\} \subset \error(N+M-2) \; \; \text{ and }
\; \; \{ \mathcal{D}_N, \mathcal{D}_M \} \subset
\mathcal{D}_{N+M-2} \]

We also define $\ad{A} : f \mapsto \{A,f \}$. We still have two
preliminary lemmas needed before starting the actual proof of the
Birkhoff normal form.
\begin{lemm}
  \label{lemma-Taylor-flot}

  For $f \in \Cinf(\RM^4;\R)$ and $A \in \error(3)$ a smooth function,
  we have

  \[
  \taylor{(\varphi^{t}_{A})^* f} = \exp(t\ \ad{\taylor{A}} )f,
  \]
  for each $t\in\RM$ for which the flow on the left-hand side is
  defined.
\end{lemm}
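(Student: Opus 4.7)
The plan is to view both sides of the asserted identity as smooth curves with values in $\RM\formel{x_1,x_2,\xi_1,\xi_2}$ equipped with the $U$-adic topology, and to show that they solve the same formal linear Cauchy problem, so that equality follows by uniqueness. Setting $g(t):=(\varphi_A^t)^*f$, I would first observe that since $A\in\error(3)$, its Hamiltonian vector field $\ham{A}$ vanishes at the origin to order at least $2$, so $0$ is a fixed point of $\varphi_A^t$ and the map $(t,x)\mapsto g(t,x)$ is jointly smooth on the relevant domain. The standard Hamiltonian identity then gives
\[
\frac{d}{dt}g(t) = \{A,g(t)\}, \qquad g(0)=f.
\]

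Next, the joint smoothness of $g(t,x)$ guarantees that each monomial coefficient of the Taylor expansion of $g(t)$ at the origin is a smooth function of $t$; hence $t\mapsto \taylor{g(t)}$ is differentiable in the $U$-adic topology and $\taylor{}$ commutes with $\frac{d}{dt}$. Combining this with identity~(\ref{equ:taylor-poisson}) turns the evolution above into the formal linear initial value problem
\[
\frac{d}{dt}\taylor{g(t)} = \ad{\taylor{A}}\taylor{g(t)}, \qquad \taylor{g(0)}=\taylor{f}.
\]

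Because $\taylor{A}\in\mathring{\error}(3)$, the inclusion $\{\mathring{\error}(3),\mathring{\error}(M)\}\subset\mathring{\error}(M+1)$ recalled just above shows that $\ad{\taylor{A}}$ raises the order by one (and kills the constant term), so that $(\ad{\taylor{A}})^k\taylor{f}\in\mathring{\error}(k+1)$ for every $k\geq 1$. The exponential series $\exp(t\,\ad{\taylor{A}})\taylor{f}:=\sum_{k\geq 0}\frac{t^k}{k!}(\ad{\taylor{A}})^k\taylor{f}$ therefore converges in the $U$-adic topology, since modulo any $\mathring{\error}(N)$ only finitely many of its terms contribute, and a direct computation shows that it solves the same formal Cauchy problem. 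Uniqueness is obtained by truncation: modulo $\mathring{\error}(N)$ the problem reduces to a linear ODE on the finite-dimensional space of jets of order $<N$, to which standard Cauchy--Lipschitz applies. This forces $\taylor{g(t)}=\exp(t\,\ad{\taylor{A}})\taylor{f}$, which is the desired identity.

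The main obstacle in this plan is the rigorous verification that $\taylor{}$ commutes with $\frac{d}{dt}$ and transports the smooth Poisson bracket to the formal one. This rests on the joint smoothness of $(t,x)\mapsto g(t,x)$ together with identity~(\ref{equ:taylor-poisson}); once this commutation is established, the remainder of the argument is essentially algebraic.
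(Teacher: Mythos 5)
Your argument is correct, but it takes a genuinely different route from the paper's. The paper also starts from the transport equation, but in its integrated (Duhamel) form, and proves by induction on $N$ that $(\varphi_A^s)^*f=\sum_{k=0}^N\frac{s^k}{k!}(\ad{A})^k f+\error(N+1)$ uniformly for $s\in[0,t]$, using Lemma~\ref{lemma-error} to push the $\error(N)$ estimates through composition with the flow; the Taylor-series identity is then read off order by order. You instead pass to Taylor series at once, convert the transport equation into a formal linear Cauchy problem via~\eqref{equ:taylor-poisson} and coefficient-wise differentiation (legitimate because $(t,x)\mapsto f(\varphi_A^t(x))$ is jointly smooth and $0$ is a fixed point of the flow, $A$ being in $\error(3)$), and conclude by uniqueness of solutions of the induced linear ODE on each finite-dimensional jet space $\RM\formel{x_1,x_2,\xi_1,\xi_2}/\mathring{\error}(N)$, where $\ad{\taylor{A}}$ acts nilpotently so the exponential is a finite sum. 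Your route dispenses with all remainder estimates and is arguably cleaner; the paper's route yields as a by-product the uniform-in-$s$ flatness of the remainders, in keeping with the quantitative $\error(N)$ calculus it exercises elsewhere, though that uniformity is not needed for the statement itself. One phrasing caveat: differentiability of $t\mapsto\taylor{(\varphi_A^t)^*f}$ should not be asserted in the $U$-adic topology (convergence of difference quotients there would be an absurdly strong requirement), but coefficient-wise, equivalently in each finite-dimensional quotient by $\mathring{\error}(N)$ --- which is precisely what your truncation and Cauchy--Lipschitz step actually uses, so the substance of the proof is unaffected.
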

Notice that, since $\taylor{A}\in\mathring{\error}(3)$, the right-hand
side
\[
\exp(t\ \ad{\taylor{A}} )f = \sum_{k=0}^\infty
\frac{t^k}{k!}\left(\ad{\taylor{A}}\right)^k f
\]
is always convergent in $\RM\formel{x_1,x_2,\xi_1,\xi_2}$. In order to
prove this lemma, we shall also use the following result which we
prove immediately~:
\begin{lemm} \label{lemma-error} For $f\in \Cinf(\R^4;\RM^4)$ and $g
  \in \Cinf (\R^4 ; \R)$, if $f(0) = 0$ and $g \in \error(N)$, then $g
  \circ f \in \error(N)$. Moreover if $f$ and $g$ depend on a
  parameter in such a way that their respective
  estimates~\eqref{equ:estimate} are uniform with respect to that parameter, then the corresponding
  $\error(N)$-estimates for $g\circ f$ are uniform as well.
\end{lemm}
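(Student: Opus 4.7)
The plan is to use condition (2) of Definition~\ref{de-plate}, i.e.\ the quantitative estimate~\eqref{equ:estimate}, rather than the Taylor-vanishing characterization. Working with pointwise bounds makes both the statement and the parameter-uniform refinement immediate, whereas passing through Taylor coefficients would force us to unpack the Fa\`a di Bruno formula unnecessarily.

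Concretely, since $g\in\error(N)$ there exist a neighbourhood $V$ of the origin and a constant $C_N>0$ such that $\abs{g(y)}\leq C_N\abs{y}^N$ for all $y\in V$, where $\abs{\cdot}$ denotes the Euclidean norm on $\RM^4$. Since $f\in\Cinf(\RM^4;\RM^4)$ satisfies $f(0)=0$, a first-order Taylor expansion of each component of $f$ around the origin yields a neighbourhood $W$ of the origin and a constant $K>0$ (depending only on a bound for the partial derivatives of $f$ on a compact neighbourhood of $0$) such that $\abs{f(x)}\leq K\abs{x}$ for all $x\in W$. Shrinking $W$ if necessary so that $f(W)\subset V$, one obtains for every $x\in W$
\[
\abs{(g\circ f)(x)} \;\leq\; C_N\abs{f(x)}^N \;\leq\; C_N K^N\abs{x}^N,
\]
which is precisely the estimate~\eqref{equ:estimate} with constant $C_N K^N$, so $g\circ f\in\error(N)$.

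For the parameter-dependent statement, I would simply track the constants through the same chain of inequalities. By assumption the constant $C_N$ controlling $g$ and the neighbourhood $V$ can be chosen independent of the parameter; for the bound on $f$, uniformity of its estimate~\eqref{equ:estimate} (with $N=1$, applied componentwise) gives a $K$ and a $W$ that are also parameter-independent, so the final constant $C_N K^N$ is independent of the parameter as well.

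The only mild subtlety, and the point I would make sure is properly phrased, is the choice of the neighbourhood $W$: one needs $f(W)\subset V$ uniformly in the parameter. This is handled by using the uniform linear bound $\abs{f(x)}\leq K\abs{x}$ to set $W:=\{x:K\abs{x}<r\}$, where $r>0$ is any radius such that the ball of radius $r$ is contained in $V$. No genuine obstacle arises; the lemma is essentially a bookkeeping statement, and its role in the paper is just to allow composition with smooth changes of variables to preserve flatness orders with controlled constants.
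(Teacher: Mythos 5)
Your proof is correct and follows essentially the same route as the paper: both use the quantitative characterization~\eqref{equ:estimate}, a linear bound $\norm{f(x)}\leq K\norm{x}$ near $0$ coming from $f(0)=0$ and smoothness, shrink the domain so that $f$ maps it into the neighbourhood where the bound on $g$ holds, and chain the two estimates to get the constant $C_NK^N$. Your explicit remark about choosing the neighbourhood uniformly in the parameter is a fair point of care that the paper leaves implicit, but it is not a different argument.
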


\begin{proof}
  Let $\norm{\cdot}_2$ denote the Euclidean norm in $\RM^4$. In view
  of the estimates~\eqref{equ:estimate}, given any two neighbourhoods
  of the origin $U$ and $V$, there exist, by assumption, some
  constants $C_f$ and $C_g$ such that
  \[
  \norm{f(X)}_2 \leq C_f \norm{X}_2 \quad \text{ and } \quad
  \abs{g(Y)} \leq C_g \norm{Y}_2^N,
  \]
  for $X\in U$ and $Y\in V$.  Since $f(0)=0$, we may choose $V$ such
  that $f(U)\subset V$. So we may write
  \[
  \abs{g(f(X))}\leq C_g \norm{f(X)}_2^N\leq C_g C_f^N \norm{X}_2^N,
  \]
  which proves the result.

\end{proof}

\begin{proof}[Proof of Lemma~\ref{lemma-Taylor-flot}]
  We write the transport equation for $f$

\[
\frac{d}{dt} \left( (\varphi^{t}_A)^* f\right)|_{t=s} =
(\varphi^{s}_A)^* \{A , f\}.
\]

When integrated, it comes as

\begin{equation}
  \label{eq-transp_int}
  (\varphi^t_A)^* f = f + \int_0^t (\varphi_A^{s})^* \{A, f \} ds
\end{equation}

We now show by induction that for all $N \in \N$ :

\[ (\varphi_{A}^s)^* f = \sum_{k=0}^N \frac{s^k}{k!} \ (\ad{A})^k (f)
+ \error(N+1), \] uniformly for $s\in[0,t]$.

\paragraph{Initial step $\mathbf{N=0}$ :} Under the integral sign
in~\eqref{eq-transp_int}, $\{A,f\} \in \error(1)$ (at least) and
$\varphi_A^s (0) = 0$, so we have with lemma \ref{lemma-error} that
$\{A, f \} \circ \varphi_A^s \in \error(1)$, uniformly for $s\in
[0,t]$. After integrating the estimate~\eqref{equ:estimate}, the
result follows from~\eqref{eq-transp_int}.

\paragraph{Induction step :} We suppose, for a given $N$ that
\[
(\varphi_{A}^s)^* f = \sum_{k=0}^N \frac{s^k}{k!} \ ad^k_{A}(f)+
\error(N+1),
\]
uniformly for $s\in[0,t]$.

Composing by $\ad{A}$ on the left and by $\varphi_A^s$ on the right,
and then integrating, we get for any $\sigma\in [0,t]$,

\[ (\varphi_A^{\sigma})^* f - f = \int_0^\sigma \frac{d}{ds} \left(
  (\varphi_A^{s})^* f \right) ds = \int_0^\sigma (\varphi^{s}_A)^* \{
A,f \} ds = \int_0^\sigma \{ A, (\varphi^{s}_A)^* f \} ds.
\]
The last equality uses that $\varphi^{s}_A$ is symplectic and
$(\varphi^{s}_A)^*A=A$.  The induction hypothesis gives
\[
\begin{aligned}
  (\varphi^{\sigma}_A)^* f & = f + \int_0^\sigma \sum_{k=0}^N \frac{s^k}{k!} (\ad{A})^{k+1} (f) ds + \int_0^\sigma \ad{A} (\error(N+1)) ds \\
  & = \sum_{k=0}^{N+1} \frac{\sigma^k}{k!} \ (\ad{A})^k(f) +
  \error(N+2),
\end{aligned}
\]
uniformly for $\sigma\in [0,t]$, which concludes the induction.
\end{proof}

The next lemma will be needed to propagate through the induction the
commutation relations.

\begin{lemm}
  \label{lemma-crochet-exp}
  For $f_1,f_2,A$ formal series and $A \in \error(3)$
  \[
  \{\exp(\ad{A}) f_1, \exp(\ad{A}) f_2 \} = \exp(\ad{A}) \{f_1,f_2\}
  \]
\end{lemm}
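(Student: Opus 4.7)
The proof will rest on the fact that $\ad{A}$ is a derivation of the Poisson bracket, which is merely the Jacobi identity read as
\[
\ad{A}\{g_1,g_2\} = \{\ad{A} g_1, g_2\} + \{g_1, \ad{A} g_2\}.
\]
Iterating this Leibniz rule I would establish, by induction on $n\geq 0$, the binomial-type formula
\[
(\ad{A})^n \{f_1,f_2\} = \sum_{k=0}^n \binom{n}{k} \{(\ad{A})^k f_1,\, (\ad{A})^{n-k} f_2\}.
\]
The base case $n=0$ is immediate; for the induction step I apply $\ad{A}$ to both sides, split each bracket via the Leibniz rule just proved, and combine the two resulting sums using Pascal's identity $\binom{n}{k}+\binom{n}{k-1}=\binom{n+1}{k}$. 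This step is purely combinatorial.

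Dividing by $n!$ and summing over $n$ then yields
\[
\exp(\ad{A})\{f_1,f_2\} = \sum_{n\geq 0}\sum_{k=0}^n \frac{1}{k!(n-k)!}\{(\ad{A})^k f_1,\, (\ad{A})^{n-k} f_2\},
\]
which is exactly the Cauchy product expressing $\{\exp(\ad{A})f_1,\exp(\ad{A})f_2\}$ by bilinearity of the bracket.

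The only point that genuinely needs care, though it is not really an obstacle, is the convergence of these series and the validity of the rearrangement inside the formal power series ring. Since $A\in\mathring{\error}(3)$, the inclusion $\{\mathring{\error}(3),\mathring{\error}(N)\}\subset\mathring{\error}(N+1)$ shows that $\ad{A}$ raises valuation by at least one; hence for every $g$ the sum $\sum_k (\ad{A})^k g / k!$ is $U$-adically convergent, and only finitely many terms of the double sum above contribute modulo any fixed power of the maximal ideal $U$. Completeness of $\RM\formel{x_1,x_2,\xi_1,\xi_2}$ together with continuity of the Poisson bracket for the $U$-adic topology then justify pulling the bracket outside the double sum and regrouping via the Cauchy product, which gives the claimed identity.
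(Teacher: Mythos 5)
Your proof is correct, but it follows a genuinely different route from the paper's. You argue entirely inside the formal ring $\RM\formel{x_1,x_2,\xi_1,\xi_2}$: the Jacobi identity makes $\ad{A}$ a derivation of the Poisson bracket, an induction gives the Leibniz-type formula $(\ad{A})^n\{f_1,f_2\}=\sum_k\binom{n}{k}\{(\ad{A})^kf_1,(\ad{A})^{n-k}f_2\}$, and the identity follows by the Cauchy product, the rearrangement being licit because $\{\error(3),\error(N)\}\subset\error(N+1)$ shows $\ad{A}$ raises the $U$-adic valuation, so only finitely many terms matter modulo any power of $U$ and the bracket is $U$-adically continuous. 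The paper instead goes through geometry: it Borel-sums $A$ to a smooth $\tilde A$, uses that the hamiltonian flow $\varphi^t_{\tilde A}$ is symplectic, hence its pullback respects Poisson brackets, and then takes Taylor expansions using Lemma~\ref{lemma-Taylor-flot} and the compatibility \eqref{equ:taylor-poisson}. Your version is more elementary and self-contained — it needs neither the Borel lemma nor Lemma~\ref{lemma-Taylor-flot}, and it sidesteps the small looseness in the paper's argument that $f_1,f_2$ are formal series while the flow argument is phrased for smooth functions (so smooth representatives are implicitly needed there). What the paper's route buys is brevity given machinery it has already built and must build anyway for Lemma~\ref{lemma-BirkP}, plus the geometric reading of $\exp(\ad{A})$ as the Taylor expansion of a flow pullback, which is exactly how the lemma is used later.
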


\begin{proof}
  The Borel lemma gives us $\tilde{A} \in \mathcal{C}^{\infty}$ whose
  Taylor expansion is $A$. Since a hamiltonian flow is symplectic, we
  have
  \begin{equation}
    (\varphi_{\tilde{A}}^t)^* \{f_1,f_2\} = \{
    (\varphi_{\tilde{A}}^t)^* f_1, (\varphi_{\tilde{A}}^t)^* f_2 \}.
    \label{equ:poisson-flot}
  \end{equation}
  We know from Lemma~\ref{lemma-Taylor-flot} that the Taylor expansion
  of $(\varphi_{\tilde{A}}^t)^* f$ is $ \exp(\ad{A})f$. Because the
  Poisson bracket commutes with the Taylor expansion (see
  equation~\eqref{equ:taylor-poisson}), we can simply conclude by
  taking the Taylor expansion in the above
  equality~\eqref{equ:poisson-flot}.
\end{proof}

\subsection{Birkhoff normal form}

We prove here a formal Birkhoff normal form for commuting Hamiltonians
near a focus-focus singularity. Recall that the focus-focus quadratic
forms $q_1$ and $q_2$ were defined in \eqref{equ:focus-focus}.
\begin{theo}
  \label{theo:BirkF}
  Let $f_i\in \R\formel{x_1,x_2,\xi_1,\xi_2}$, $i=1,2$, such that
  \begin{itemize}
  \item $\{f_1,f_2\} = 0$
  \item $f_1 = a q_1 + b q_2 + \error(3)$
  \item $f_2 = c q_1 + d q_2 + \error(3)$
  \item $\begin{pmatrix} a & b \\ c & d \end{pmatrix} \in GL_2(\R)$
  \end{itemize}

  Then there exists $A \in \R\formel{x_1,x_2,\xi_1,\xi_2}$, with $A\in
  \error(3)$, and there exist $g_i\in \R\formel{t_1,t_2}$, $i=1,2$
  such that :
  \begin{equation}
    \exp(\ad{A}) (f_i) = g_i(q_1,q_2), \qquad i=1,2
    \label{equ:birkhoff} 
  \end{equation}
\end{theo}

\begin{proof}% [\bf Proof of \ref{theo:BirkF} :]

  Let's first start by left-composing $F:=(f_1,f_2)$ by $
  \begin{pmatrix}
    a & b\\c & d
  \end{pmatrix}
  ^{-1}$, to reduce to the case where $f_1 = q_1 + \error(3)$ and $f_2
  = q_2 + \error(3)$. Let $z_1 := x_1 + i x_2$ and $z_2 := \xi_1 + i
  \xi_2$.

  Of course, an element in $\R\formel{x_1,x_2,\xi_1,\xi_2}$ can be
  written as a formal series in the variables
  $z_1,\bar{z_1},z_2,\bar{z_2}$.  We consider a generic monomial $
  {\bf z}^{\alpha \beta} = z_1^{\alpha_1} z_2^{\alpha_2}
  \bar{z_1}^{\beta_1} \bar{z_2}^{\beta_2} $. Using~\eqref{eq-flow_C},
  it is now easy to compute~:
  \begin{equation}
  \begin{aligned}
    \{q_1,  {\bf z}^{\alpha \beta} \} & = \frac{d}{dt} \left(  (\varphi_{q_1}^t)^* z_1^{\alpha_1} z_2^{\alpha_2} \bar{z_1}^{\beta_1} \bar{z_2}^{\beta_2} \right)\vert_{t=0} \\
    & = \frac{d}{dt} \left(e^{(\alpha_1 - \alpha_2 + \beta_1 -
        \beta_2) t} {\bf z}^{\alpha \beta} \right)\vert_{t=0} =
    (\alpha_1 - \alpha_2 + \beta_1 - \beta_2) {\bf z}^{\alpha \beta}.
  \end{aligned}\label{equ:commute-q1}
\end{equation}

  For the same reasons, we have :
  \begin{equation}
    \{q_2, {\bf z}^{\alpha \beta} \} = i(\alpha_1 + \alpha_2 - \beta_1 -
    \beta_2) {\bf z}^{\alpha \beta}.\label{equ:commute-q2}
  \end{equation}

  Hence, the action of $q_1$ and $q_2$ is diagonal on this basis. A
  first consequence of this is the following remark: any formal series
  $f\in \R\formel{x_1,x_2,\xi_1,\xi_2}$ such that $\{f,q_1\} =
  \{f,q_2\}=0$ has the form $f=g(q_1,q_2)$ with $g\in
  \R\formel{t_1,t_2}$ (and the reverse statement is obvious). Indeed,
  let
  \[
  f = \sum f_{\alpha\beta} {\bf z}^{\alpha \beta};
  \]
  
  If $\{q_1,f\} = \{q_2,f\} = 0$, then all $f_{\alpha\beta}$ must vanish, except
  when $\alpha_1 - \alpha_2 + \beta_1 - \beta_2 = 0$ and $\alpha_1 +
  \alpha_2 - \beta_1 - \beta_2 = 0$. For these monomials, $\alpha_1 =
  \beta_2$ and $\alpha_2 = \beta_1$, that is, they are of the form
  $(\bar{z}_1 z_2)^\lambda (z_1 \bar{z}_2)^\mu = q^\lambda
  \bar{q}^\mu$ (remember that $q=q_1 + iq_2 = \bar{z_1} z_2$). Thus we can write
  \[
  f = \sum c_{k\ell} q_1^k q_2^\ell, \qquad c_{k\ell}\in\CM.
  \]
  Specializing to $x_2=0$ we have $q_1^kq_2^\ell =
  x_1^{k+\ell}\xi_1^k\xi_2^\ell$. Since
  $f\in\R\formel{x_1,x_2,\xi_1,\xi_2}$, we see that $c_{k\ell}\in\RM$,
  which establishes our claim.

  We are now going to prove the theorem by constructing $A$ and $g$ by
  induction.

  \paragraph{Initial step : }

  Taking $A=0$, the equation~\eqref{equ:birkhoff} is already satisfied
  modulo $\error(3)$, by assumption.

\paragraph{Induction step :}
Let $N\geq 2$ and suppose now that the equation is solved modulo
$\error(N+1)$~: we have then constructed polynomials
$\mathcal{A}^{(N)}$, $g_i^{(N)}$ such that
\[
\exp(\ad{\mathcal{A}^{(N)}}) (f_i) \equiv \underbrace{g_i^{(N)}
  (q_1,q_2)}_{d^{\circ} < N+1} + \underbrace{ r_i^{N+1}
  (z_1,z_2,\bar{z}_1,\bar{z}_2)}_{\in \mathcal{D}_{N+1} } \mod
\error(N+2).
\]

With the lemma \ref{lemma-crochet-exp}, we have that
\[
\{exp(\ad{\mathcal{A}^{(N)}}) f_1, exp(\ad{\mathcal{A}^{(N)}}) f_2 \}
= exp(\ad{\mathcal{A}^{(N)}}) \{f_1,f_2\} = 0
\]

Hence
\[ \{ g_1^{(N)},g_2^{(N)} \} + \{ g_1^{(N)} , r_2^{N+1} \} + \{
r_2^{N+1},g_1^{(N)} \} + \{ r_1^{N+1} ,r_2^{N+1} \} \equiv 0 \mod
\error(N+2).\]

Since $g_1^{(N)}$ and $g_2^{(N)}$ are polynomials in $(q_1,q_2)$, they
must commute~:\\
$\{ g_1^{(N)},g_2^{(N)} \} =0$. On the other hand, $\{
r_1^{N+1} ,r_2^{N+1} \} \in \mathcal{D} (2N)$ so

\[ \{ \underbrace{g_1^{(N)} }_{=q_1 + \error(3)} , r_2^{N+1} \} + \{
r_1^{N+1}, \underbrace{g_2^{(N)} }_{=q_2 + \error(3)} \} \equiv 0 \mod
\error(N+2)
\]
which implies $\{ r_1^{N+1} , q_2 \} = \{ r_2^{N+1} , q_1 \}$.

One then looks for $\mathcal{A}^{(N+1)} - \mathcal{A}^{(N)} = A_{N+1}
\in \mathcal{D}_{N+1} $, and $g_i^{(N+1)} - g_i^{(N)} = g_i^{N+1}
\in~\mathcal{D}_{N+1} $, with $\{g_i^{N+1},q_j \} = 0$ for $i,j=1,2$,
such that

\begin{equation}
  \exp(\ad{\mathcal{A}^{(N+1)}}) (f_i) \equiv g^{(N+1)} \mod \error(N+2).
  \label{equ:solve}
\end{equation}

We have

\[ \exp(\ad{A^{(N+1)}}) = \exp(\ad{\mathcal{A}^{(N)}} + \ad{A_{N+1}})
= \sum_{k=0}^{+\infty} \frac{( \ad{\mathcal{A}^{(N)}} + \ad{A_{N+1}}
  )^k}{k!} \]

Here, one needs to expand a non-commutative binomial. We set

\[ \langle ( \ad{\mathcal{A}^{(N)}} )^{k-l} (\ad{A_{N+1}})^l
\rangle \]

for the summand of all possible words formed with $k-l$ occurrences of
$\ad{\mathcal{A}^{(N)}}$ and $l$ occurences of $\ad{A_{N+1}}$. We have
then the formula

\[
\begin{split}
  ( \ad{\mathcal{A}^{(N)}} + \ad{A_{N+1}} )^k = \ad{\mathcal{A}^{(N)}}^k & + \langle(\ad{\mathcal{A}^{(N)}})^{k-1} \ad{A_{N+1}} \rangle\\
  & + \ldots + \langle \ad{\mathcal{A}^{(N)}} (\ad{A_{N+1}})^{k-1}
  \rangle + (\ad{A_{N+1}})^k
\end{split}
\]

What are the terms that we have to keep modulo $\error(N+2)$ ? As far
as we only look here to modifications of the total valuation, the
order of the composition between $ad$'s doesn't matter here. Let's
examinate closely :\\
$ \ad{A_{N+1}}^k ( \error(n) ) \subset \error(k(N+1) + n - 2 ) $, and
so for any $h\in\error(3)$ and any $k\geq 1$,
\[
\ad{A_{N+1}}^k(h) \in \error(N+2).
\]

Thus
\[ \exp(\ad{A^{(N+1)}}) (f_i) \equiv \underbrace{\sum_{k=0}^{+\infty}
  \frac{\ad{\mathcal{A}^{(N)}}^k (f_i)}{k!}}_{ =
  exp(\ad{\mathcal{A}^{(N)}}) (f_i) } + \ad{A_{N+1}} (q_i) \mod
\error(N+2). \]

Therefore, equation~\eqref{equ:solve} becomes

\[
g_i^{(N)} + r_i^{N+1} + \ad{A_{N+1}} (q_i) = g^{(N+1)} \mod
\error(N+2),
\]
and hence
\begin{equation}
  \text{\eqref{equ:solve}} \ssi \left\lbrace
    \begin{aligned}
      \{A_{N+1},q_1\} + r_1^{N+1} & = g_1^{N+1}\\
      \{A_{N+1},q_2\} + r_2^{N+1} & = g_2^{N+1}
    \end{aligned}
  \right.
  \label{equ:birkhoff-cohomo}
\end{equation}

Using the notation $A_{N+1}=\sum
A_{N+1,\alpha\beta}\mathbf{z}^{\alpha\beta}$, $r_i^{N+1}=\sum
r_{i,\alpha\beta}^{N+1}\mathbf{z}^{\alpha\beta}$, and $g_i^{N+1}=\sum
g_{i,\alpha\beta}^{N+1}\mathbf{z}^{\alpha\beta}$, we have
\[ \text{\eqref{equ:birkhoff-cohomo}} \ssi \left\lbrace
  \begin{aligned}
    A_{N+1,\alpha\beta}(\alpha_1 - \alpha_2
    + \beta_1 - \beta_2) + r_{1,\alpha\beta}^{N+1} & = g_{1,\alpha\beta}^{N+1}\\
    iA_{N+1,\alpha\beta}(\alpha_1 + \alpha_2 - \beta_1 - \beta_2) +
    r_{2,\alpha\beta}^{N+1} & = g_{2,\alpha\beta}^{N+1}
  \end{aligned}
\right.
\]

We can then solve the first equation by setting,
\begin{itemize}
\item[a)] when $ \alpha_1 - \alpha_2 + \beta_1 - \beta_2 \neq 0 $ :\\
  $g_{1,\alpha\beta}^{N+1}:=0$ and $ A_{\alpha \beta} := \frac{-
    r_{1,\alpha \beta}^{N+1}}{\alpha_1 - \alpha_2 + \beta_1 - \beta_2}
  $ (these choices are necessary);
\item[b)] when $ \alpha_1 - \alpha_2 + \beta_1 - \beta_2 = 0 $ :\\
  $g_{1,\alpha\beta}^{N+1}:=r_{1,\alpha\beta}^{N+1}$ (necessarily),
  and $A_{\alpha \beta} := 0$ (this one is arbitrary).
\end{itemize}

Similarly, we solve the second equation by setting,
\begin{itemize}
\item[c)] when $ \alpha_1 + \alpha_2 - \beta_1 -
  \beta_2 \neq 0 $:\\
  $g_{1,\alpha\beta}^{N+1}:=0$ and $ A_{\alpha \beta} := \frac{-
    r_{2,\alpha \beta}^{N+1}}{i(\alpha_1 + \alpha_2 - \beta_1 -
    \beta_2)} $ (necessarily);
\item[d)] when $ \alpha_1 + \alpha_2 - \beta_1 -
  \beta_2 = 0 $:\\
  $g_{2,\alpha\beta}^{N+1}:=r_{2,\alpha\beta}^{N+1}$ (necessarily),
  and $A_{\alpha \beta} := 0$ (arbitrarily).
\end{itemize}

Notice that (a) and (c) imply (in view of~\eqref{equ:commute-q1}
and~\eqref{equ:commute-q2}) that $g_i^{N+1}$ commutes with $q_1$ and
$q_2$.

Of course we need to check that the choices for $A_{\alpha\beta}$ in
(a) and (c) are compatible with each other. This is ensured by the
``cross commuting relation'' $\{ r_1^{N+1} , q_2 \} = \{ r_2^{N+1} ,
q_1 \} $, which reads
\[
i(\alpha_1 + \alpha_2 - \beta_1 - \beta_2) r_{1,\alpha \beta}^{N+1}
=(\alpha_1 - \alpha_2 + \beta_1 - \beta_2) r_{2,\alpha \beta}^{N+1}.
\]

\end{proof}
\begin{rema}
  The relation $\{ r_1^{N+1} , q_2 \} = \{ r_2^{N+1} , q_1 \} $ is a
  cocycle condition if we look at \eqref{equ:birkhoff-cohomo} as a
  cohomological equation. The relevant complex for this cohomological
  theory is a deformation complex ``\`a la Chevalley-Eilenberg'',
  described, for instance, in \cite{san-miranda}.
\end{rema}

As a corollary of the Birkhoff normal form, we get a statement
concerning $\Cinf$ smooth functions, up to a flat term.

\begin{lemm}
  \label{lemma-BirkP}
  Let $F=(f_1,f_2)$, where $f_1$ and $f_2$ satisfy the same hypothesis
  as in the Birkhoff theorem~\ref{theo:BirkF}. Then there exist a
  symplectomorphism $\chi$ of $\R^4=T^*\RM^2$, tangent to the
  identity, and a smooth local diffeomorphism
  $G:(\R^2,0)\fleche(\R^2,0)$, tangent to the matrix $\begin{pmatrix}
    a & b \\ c & d \end{pmatrix}$ such that~:

  \[ \chi^* F = \tilde{G} (q_1,q_2) + \error (\infty). \]
\end{lemm}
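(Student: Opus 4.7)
The plan is to lift the formal Birkhoff normal form of Theorem~\ref{theo:BirkF} to a smooth statement using Borel's lemma and Lemma~\ref{lemma-Taylor-flot}, which precisely bridges smooth flows and formal $\exp(\ad{A})$. First, I would apply Theorem~\ref{theo:BirkF} to the Taylor series $\taylor{f_1},\taylor{f_2}\in\R\formel{x_1,x_2,\xi_1,\xi_2}$. These satisfy the formal hypotheses because $\{f_1,f_2\}=0$ and the Poisson bracket commutes with Taylor expansion (equation~\eqref{equ:taylor-poisson}). This produces formal series $A\in\mathring{\error}(3)$ and $g_1,g_2\in\R\formel{t_1,t_2}$ with
\[
\exp(\ad{A})\taylor{f_i} = g_i(q_1,q_2), \qquad i=1,2.
\]

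Next, invoke the Borel lemma (Lemma~\ref{lemma-Borel}) twice: to realize $A$ as a smooth function $\tilde A\in\Cinf(\R^4)$ with $\taylor{\tilde A}=A$, and each $g_i$ as a smooth function $\tilde g_i\in\Cinf(\R^2,0)$ with $\taylor{\tilde g_i}=g_i$. Since $\tilde A\in\error(3)$, the Hamiltonian vector field $\ham{\tilde A}$ is $\error(2)$, has a critical point at the origin with vanishing linear part, so its flow $\varphi^t_{\tilde A}$ is defined on a neighbourhood of $0$ for $t\in[0,1]$ and fixes the origin. Set
\[
\chi := \varphi^1_{\tilde A}.
\]
This is a local symplectomorphism, and $d\chi(0)=\mathrm{Id}$ because $dX_{\tilde A}(0)=0$.

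The key computation is then purely formal. By Lemma~\ref{lemma-Taylor-flot},
\[
\taylor{\chi^* f_i} = \exp(\ad{\taylor{\tilde A}})\taylor{f_i} = \exp(\ad{A})\taylor{f_i} = g_i(q_1,q_2) = \taylor{\tilde g_i(q_1,q_2)},
\]
so $\chi^* f_i - \tilde g_i(q_1,q_2)$ is a smooth function with vanishing Taylor series at the origin, hence flat. Defining $\tilde G:=(\tilde g_1,\tilde g_2)$, this reads $\chi^* F = \tilde G(q_1,q_2)+\error(\infty)$.

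It remains to check that $\tilde G$ is a local diffeomorphism tangent to the matrix $\left(\begin{smallmatrix}a&b\\c&d\end{smallmatrix}\right)$. Since $f_i$ has quadratic leading term and $A\in\error(3)$, direct degree counting in $\exp(\ad{A})f_i$ shows that the quadratic part is unchanged: $g_1(q_1,q_2)=aq_1+bq_2+\error(4)$ and $g_2(q_1,q_2)=cq_1+dq_2+\error(4)$. Hence $\tilde g_1(t_1,t_2)=at_1+bt_2+O(|t|^2)$ and similarly for $\tilde g_2$, so $d\tilde G(0)=\left(\begin{smallmatrix}a&b\\c&d\end{smallmatrix}\right)\in GL_2(\R)$ and $\tilde G$ is indeed a local diffeomorphism of $(\R^2,0)$. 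I expect no real obstacle here; the only delicate point is to ensure the time-$1$ flow of $\tilde A$ is defined on a common neighbourhood of the origin, which follows from $\tilde A\in\error(3)$ by standard ODE arguments after shrinking the domain.
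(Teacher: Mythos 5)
Your proposal is correct and follows essentially the same route as the paper: apply the formal Birkhoff theorem to the Taylor series, Borel-sum $A$ and $g_i$, take $\chi:=\varphi^1_{\tilde A}$, and conclude via Lemma~\ref{lemma-Taylor-flot} that the difference has flat Taylor series. Your additional checks that $d\chi(0)=\mathrm{Id}$ and that $d\tilde G(0)=\begin{pmatrix} a & b \\ c & d\end{pmatrix}$ (so $\tilde G$ is a local diffeomorphism) are details the paper leaves implicit, and they are argued correctly.
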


\begin{proof}
  We use the notation of \eqref{equ:birkhoff}. Let $\tilde{g}_j$,
  $j=1,2$ be Borel summations of the formal series $g_j$, and let
  $\tilde{A}$ be a Borel summation of $A$. Let
  $\tilde{G}:=(\tilde{g}_1,\tilde{g}_2)$ and
  $\chi:=\phy^1_{\tilde{A}}$. Applying Lemma~\ref{lemma-Taylor-flot},
  we see that the Taylor series of
  \[
  \chi^*F - \tilde{G}(q_1,q_2)
  \]
  is flat at the origin.

\end{proof}

We can see that Lemma \ref{lemma-BirkP} gives us the main theorem
modulo a flat function. The rest of the paper is devoted to absorbing
this flat function.

\section{A Morse lemma in the focus-focus case}

One of the key ingredients of the proof is a (smooth, but non
symplectic) equivariant Morse lemma for commuting functions. In view
of the Birkhoff normal form, it is enough to state it for flat
perturbations of quadratic forms, as follows.
\begin{theo}
  \label{theo:morse}
  Let $h_1, h_2$ be functions in $\Cinf (\R^4,0)$ such that
  \[ \begin{cases}
    h_1 = q_1 + \error (\infty) \\
    h_2 = q_2 + \error (\infty).
  \end{cases}
  \]
  Assume $\{ h_1 , h_2 \} = 0$ for the canonical symplectic form on
  $\RM^4=T^*\RM^2$.

  Then there exists a local diffeomorphism $\Upsilon$ of $(\R^2,0)$ of
  the form $ \Upsilon = id +\error (\infty) $ such that

  \[ \Upsilon^* h_i = q_i \; , \; i=1,2 \]

  Moreover, we can choose $\Upsilon$ such that the symplectic gradient
  of $q_2$ for $\omega_0$ and for $\omega = \Upsilon^* \omega_0$ are
  equal, which we can write as

  \[(\mathcal{P}): \quad \imath_{X^0_2}(\Upsilon^* \omega_0) = -
  dq_2 \]
 
  That is to say, $\Upsilon$ preserves the $\mathbb{S}^1$-action
  generated by $q_2$.
\end{theo}

\subsection{Proof of the classical flat Morse lemma}

In a first step, we will establish the flat Morse lemma without the
$(\mathcal{P})$ equivariance property. This result, that we call the
``classical`` flat Morse lemma, will be used in the next section to
show the equivariant result.

\begin{proof}
  Using Moser's path method, we shall look for $\Upsilon$ as the
  time-1 flow of a time-dependent vector field $X_t$, which should be
  uniformly flat for $t\in[0,1]$.

  We define : $H_t := (1-t) Q + t H = (1-t)\begin{pmatrix} q_1 \\
    q_2 \end{pmatrix} + t \begin{pmatrix} h_1 \\ h_2 \end{pmatrix}$.
  We want $X_t$ to satisfy

  \[ (\varphi_{X_t}^{t})^* H_t = Q \ ,\ \forall t \in [0,1]. \]
  Differentiating this equation with respect to $t$, we get
  \[
  (\varphi_{X_t}^{t})^* \left[ \frac{\partial H_t}{\partial t} +
    \mathcal{L}_{X_t} H_t \right] = (\varphi_{X_t}^{t})^* \left[-Q + H
    + ( \imath_{X_t} d + d\imath_{X_t}) H_t \right] = 0.
  \]
  So it is enough to find a neighbourood of the origin where one can
  solve, for $t\in[0,1]$, the equation

\begin{equation}
  dH_t(X_t) = Q - H
  =: R.
  \label{equ:moser} 
\end{equation}

Notice that $R$ is flat and $dH_t = dQ - tdR = dQ + \error (\infty)$.

Let $\Omega$ be an open neighborhood of the origin in $\R^4$. Let's
consider $dQ$ as a linear operator from $\mathcal{X}(\Omega)$, the
space of smooth vector fields, to $\Cinf(\Omega)^2$, the space of
pairs of smooth functions. This operator sends flat vector fields to
flat functions.

Before going on, we wish to add here a few words concerning flat
functions. Assume $\Omega$ is contained in the euclidean unit
ball. Let $\Cinf(\Omega)_{\text{flat}}$ denote the vector space of
flat functions defined on $\Omega$. For each integer $N\geq 0$, and
each $f\in \Cinf(\Omega)_{\text{flat}}$, the quantity
\[
p_N(f)=\sup_{z\in\Omega}\frac{\abs{f(z)}}{\norm{z}_2^N}
\]
is finite due to~\eqref{equ:estimate}, and thus the family $(p_N)$ is
an increasing\footnote{$p_{N+1}\geq p_N$} family of norms on
$\Cinf(\Omega)_{\text{flat}}$. We call the corresponding topology the
``local topology at the origin'', as opposed to the usual topology
defined by suprema on compact subsets of $\Omega$. Thus, a linear
operator $A$ from $\Cinf(\Omega)_{\text{flat}}$ to itself is
continuous in the local topology if and only if
\begin{equation}
  \forall N\geq 0, \quad \exists N'\geq 0, \exists C>0, \forall f \quad
  p_{N}(Af)\leq Cp_{N'}(f).
  \label{equ:topology}
\end{equation}

For such an operator, if $f$ depends on an additional parameter and is
uniformly flat, in the sense that the estimates~\eqref{equ:estimate}
are uniform with respect to that parameter, then $Af$ is again uniformly flat.

\begin{lemm}
  \label{lemm:operator}
  Restricted to flat vector fields and flat functions, $dQ$ admits a
  linear right inverse $\Psi : \Cinf(\Omega)^2_{\text{flat}} \fleche
  \mathcal{X}(\Omega)_{\text{flat}}$~: for every $U=(u_1,u_2)\in
  \Cinf(\Omega)^2$ with $u_j\in\error(\infty)$, one has
  \[
  \begin{cases}
    dq_1(\Psi(U)) = u_1 \\
    dq_2(\Psi(U)) = u_2.
  \end{cases}
  \]
  Moreover $\Psi$ is a multiplication operator, explicitly~:
  \begin{equation}
    \Psi(U) = \frac{1}{x_1^2 + x_2^2 + \xi_1^2 + \xi_2^2}
    \begin{pmatrix}
      \xi_1 & \xi_2\\
      \xi_2 & -\xi_1 \\
      x_1 & -x_2\\
      x_2 & x_1
    \end{pmatrix}
    \begin{pmatrix}
      u_1\\ u_2
    \end{pmatrix}
    \label{equ:operator}
  \end{equation}
\end{lemm}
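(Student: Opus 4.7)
The plan has two parts: first, verify the explicit formula \eqref{equ:operator} by a direct algebraic check; second, show that the apparent singularity at the origin is harmless when the input is flat.

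For the first part, one computes $dq_1 = \xi_1 dx_1 + \xi_2 dx_2 + x_1 d\xi_1 + x_2 d\xi_2$ and $dq_2 = \xi_2 dx_1 - \xi_1 dx_2 - x_2 d\xi_1 + x_1 d\xi_2$. Writing these as the rows of a $2\times 4$ matrix $M$ in the basis $(dx_1, dx_2, d\xi_1, d\xi_2)$, one checks immediately that
$$MM^T = r^2 I_2, \qquad r^2 := x_1^2+x_2^2+\xi_1^2+\xi_2^2,$$
since the off-diagonal entry is $\xi_1\xi_2-\xi_2\xi_1-x_1x_2+x_2x_1=0$. Hence $\Psi := M^T/r^2$ is a right inverse of $dQ$ on $\R^4 \setminus \{0\}$, and one reads off that it is precisely the matrix displayed in \eqref{equ:operator}, viewed as the column vector of components of the vector field in the basis $(\partial_{x_1}, \partial_{x_2}, \partial_{\xi_1}, \partial_{\xi_2})$.

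The main obstacle is to show that for any flat pair $U=(u_1,u_2)$, the vector field $\Psi(U)$, defined a priori only on $\Omega\setminus\{0\}$, extends to a smooth vector field on $\Omega$ which is again flat at the origin. Each entry of $M^T/r^2$ is of the form $P/r^2$ where $P$ is linear in the coordinates, hence smooth on $\R^4\setminus\{0\}$ and homogeneous of degree $-1$; consequently its derivatives of order $|\gamma|$ are bounded by $C_\gamma \norm{z}_2^{-1-|\gamma|}$ on a punctured neighbourhood of the origin. For a flat function $u$, Leibniz's rule yields
$$\partial^\alpha\!\left(\tfrac{P}{r^2}\, u\right) = \sum_{\beta\leq\alpha}\binom{\alpha}{\beta}\,\partial^{\alpha-\beta}\!\left(\tfrac{P}{r^2}\right)\partial^\beta u.$$
Each $\partial^\beta u$ is itself flat, so bounded by $C_{N,\beta}\norm{z}_2^N$ for arbitrarily large $N$, and thus the displayed sum is controlled by $C'_{N,\alpha}\norm{z}_2^{N-1-|\alpha|}$. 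A standard induction on $|\alpha|$ (checking at the origin that the directional derivative of $\partial^{\alpha}(Pu/r^2)$, extended by $0$, is still $0$ and matches $\partial^{\alpha+e_i}(Pu/r^2)$) then shows that $Pu/r^2$ extends to a $C^\infty$ function on $\Omega$, with all partial derivatives vanishing at the origin; that is, to an element of $\Cinf(\Omega)_{\text{flat}}$.

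Finally, the very same chain of estimates shows that $\Psi$ is continuous in the local topology at the origin, in the sense of \eqref{equ:topology}: componentwise,
$$p_N(\Psi(U)) \leq C_N\, \bigl(p_{N+1}(u_1)+p_{N+1}(u_2)\bigr)$$
for some $C_N>0$. In particular, if $U$ depends on a parameter and is uniformly flat, so is $\Psi(U)$, which is exactly what Lemma~\ref{lemma-error} and the Moser-type equation \eqref{equ:moser} will require when $U = R = Q - H$ is replaced by $-dH_t(\,\cdot\,)^{-1}$ applied to $R$ and integrated in $t\in[0,1]$.
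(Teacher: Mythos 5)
Your proof is correct, and it reaches the paper's formula by a slightly different, more computational route. The paper derives \eqref{equ:operator} constructively in the complex coordinates $z_1=x_1+ix_2$, $z_2=\xi_1+i\xi_2$: writing $dq = z_2\, d\bar z_1 + \bar z_1\, dz_2$, it reduces the problem to $z_2 b + \bar z_1 c = u$ and solves it by invoking the division fact that a flat $u$ can be written $u=(|z_1|^2+|z_2|^2)\tilde u$ with $\tilde u$ smooth and flat, so that $b=\bar z_2\tilde u$, $c=z_1\tilde u$ are manifestly smooth and flat; your identity $MM^T=r^2 I_2$ is exactly the real form of that computation and yields the same operator. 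What you do differently is to prove, rather than assert, the analytic content: the paper simply states the existence of $\tilde u$, whereas you establish directly, via Leibniz, the homogeneity of degree $-1$ of the entries of $M^T/r^2$, and the standard extension-by-induction argument at the origin, that multiplication by these entries sends flat functions to smooth flat functions; this also gives the continuity in the local topology that the paper records separately in \eqref{equ:continuity}. So your version is more self-contained on the one point the paper leaves implicit, while the paper's complex-variable derivation is shorter because smoothness is immediate once the division fact is granted. (Your closing remark about Lemma~\ref{lemma-error} and equation \eqref{equ:moser} is loosely phrased, but it is extraneous to the lemma and does not affect the argument.)
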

In~\eqref{equ:operator} the right-hand side is a matrix product; we
have identified $\Psi(U)$ with a vector of $4$ functions corresponding
to the coordinates of $\Psi(U)$ in the basis
$(\deriv{}{x_1},\deriv{}{x_2}, \deriv{}{\xi_1},\deriv{}{\xi_2})$.

An immediate corollary of this lemma is that $\Psi$ is continuous in
the local topology. Indeed, if $\abs{u_j}\leq
C(x_1^2+x_2^2+\xi_1^2+\xi_2^2)^{N/2}$ for $j=1,2$, then
\begin{equation}
  \norm{\Psi(U)(x_1,x_2,\xi_1,\xi_2)} \leq d(\Omega)C
  (x_1^2+x_2^2+\xi_1^2+\xi_2^2)^{N/2-1}.
  \label{equ:continuity}    
\end{equation}
Here $\norm{\Psi(U)(x_1,x_2,\xi_1,\xi_2)}$ is the supremum norm in
$\RM^4$ and $d(\Omega)$ is the diameter of $\Omega$.

Now assume the lemma holds, and let $A:=\Psi\circ dR$, where
$R=(r_1,r_2)$ was defined in~\eqref{equ:moser}.  $A$ is a linear
operator from $\mathcal{X}(\Omega)_{\text{flat}}$ to itself, sending a
vector field $v$ to the vector field $\Psi(dr_1(v),dr_2(v))$. From the
lemma, we get~:
\[
dQ(A(v)) = dR(v).
\]
We claim that for $\Omega$ small enough, the operator $(\text{Id}-tA)$
is invertible, and its inverse is continuous in the local topology,
uniformly for $t\in[0,1]$. Now let
\[
X_t:=(\text{Id}-tA)^{-1}\circ \Psi(R).
\]
We compute~:
\[
dH_t(X_t) = dQ(X_t) - tdR(X_t) = dQ(X_t) -t dQ(A(X_t)) =
dQ(\text{Id}-tA)(X_t).
\]
Hence
\[
dH_t(X_t) = dQ (\Psi(R)) = R.
\]
Thus equation~\eqref{equ:moser} is solved on $\Omega$. Since
$X_t(0)=0$ for all $t$, the standard Moser's path argument shows that,
up to another shrinking of $\Omega$, the flow of $X_t$ is defined up
to time $1$. Because of the continuity in the local topology, $X_t$ is
uniformly flat, which implies that the flow at time $1$, $\Upsilon$,
is the identity modulo a flat term.

\end{proof}

To make the above proof complete, we still need to prove
Lemma~\ref{lemm:operator} and the claim concerning the invertibility
of $(\text{Id}-tA)$.

\begin{proof}[Proof of Lemma~\ref{lemm:operator}]

  Let $u:=u_1+ i u_2$ and $q:=q_1 + i q_2$. Thus we want to find a
  real vector field $Y$ such that
  \begin{equation}
    dq(Y) = u.
    \label{equ:moserY}
  \end{equation}

  We invoke again the complex structure used in the Birkhoff theorem,
  and look for $Y$ in the form $Y = a \frac{\partial }{\partial z_1} +
  b \frac{\partial }{\partial \bar{z}_1} + c \frac{\partial }{\partial
    z_2} + d \frac{\partial }{\partial \bar{z}_2} $ . The vector field
  $Y$ is real if and only if $ a = \bar{b} $ and $ c =
  \bar{d}$. Writing
  \[
  dq = d(\bar{z_1} z_2) = z_2 d\bar{z}_1 + \bar{z}_1 dz_2
  \]
  we see that~\eqref{equ:moserY} is equivalent to
  \[
  z_2b + \bar{z}_1c = u.
  \]
  Since $u$ is flat, there exists a smooth flat function $\tilde{u}$
  such that $u = ( |z_1|^2 + |z_2|^2 ) \tilde{u} = z_1 \bar{z}_1
  \tilde{u} + z_2 \bar{z}_2\tilde{u}$.

  Thus we find a solution by letting $\begin{cases} b = \bar{z}_2 \tilde{u} \\
    c = {z}_1 \tilde{u} \end{cases}$. Back in the original basis
  $(\deriv{}{x_1},\deriv{}{x_2}, \deriv{}{\xi_1},\deriv{}{\xi_2})$, we
  have then

  \[ \Psi(U) := Y = \frac{1}{|z_1|^2 + |z_2|^2} \left( \Re e(\bar{z}_2
    u), \Im m(\bar{z}_2 u), \Re e(z_1 u), \Im m(z_1 u) \right)
  \]
  Thus, $\Psi$ is indeed a linear operator and its matrix on this
  basis is
  \[
  \frac{1}{x_1^2 + x_2^2 + \xi_1^2 + \xi_2^2}
  \begin{pmatrix}
    \xi_1 & \xi_2\\
    \xi_2 & - \xi_1 \\
    x_1 & -x_2\\
    x_2 & x_1
  \end{pmatrix}
  \]
\end{proof}

Now consider the operator $A=\Psi\circ dR$.  We see from
equation~\eqref{equ:operator} and the fact that the partial
derivatives of $R$ are flat that, when expressed in the basis
$(\deriv{}{x_1},\deriv{}{x_2}, \deriv{}{\xi_1},\deriv{}{\xi_2})$ of
$\mathcal{X}(\Omega)_{\text{flat}}$, $A$ is a $4\times 4$ matrix with
coefficients in $\Cinf(\Omega)_{\text{flat}}$. In particular one may
choose $\Omega$ small enough such that $\sup_{z\in\Omega}\norm{A(z)}<
1/2$. Thus, for all $t\in [0,1]$, the matrix $(\textup{Id}-t A)$ is
invertible and its inverse is of the form $\textup{Id} +
t\tilde{A}_t$, where $\tilde{A}_z$ has smooth coefficients and
$\sup_{z\in\Omega}\norm{\tilde{A}_t(z)}< 1$. Now $p_N(f+t\tilde{A}_t
f)\leq p_N(f)+p_N(\tilde{A}_tf)\leq 2p_N(f)$~:
$(\textup{Id}+t\tilde{A}_t)$ is uniformly continuous in the local
topology.

With this the proof of Theorem~\ref{theo:morse}, without the
$(\mathcal{P})$ property, is now complete.

\subsection{Proof of the equivariant flat Morse lemma}

We will now use the flat Morse lemma we've just shown to obtain the
equivariant version, \emph{ie.} Theorem~\ref{theo:morse}.

The main ingredient will be the construction of a smooth hamiltonian
$S^1$ action on the symplectic manifold $(\RM^4,\omega_0)$ that leaves
the original moment map $(h_1,h_2)$ invariant. The natural idea is to
define the moment map $H$ through an action integral on the lagrangian
leaves, but because of the singularity, it is not obvious that we get
a smooth function.

Let $\gamma_z$ be the loop in $\RM^{4}$ equal to the
$S^1$-orbit of $z$ for the action generated by $q_2$ with canonical
symplectic form $\omega_0$. Explicitly (see~\eqref{eq-flow_C}), we can
write $z=(z_1,z_2)$ and
\[
\gamma_z(t) = (e^{2\pi it}z_1, e^{2\pi it}z_2), \qquad
t\in[0,1].
\]
Notice that if $z=0$ then the ``loop'' $\gamma_z$ is in fact
just a point. A key formula is
\[
q_2 = \frac{1}{2\pi}\int_{\gamma_z} \!\!\! \alpha_0.
\]
This can be verified by direct computation, or as a consequence of the
following lemma.
 
\begin{lemm}
  \label{lemm:homogeneous}
  Let $\alpha_0$ be the Liouville 1-form on $\RM^{2n}=T^*\RM^n$ (thus
  $\omega_0 = d\alpha_0$). If $H$ is a hamiltonian which is
  homogeneous of degree $n$ in the variables $(\xi_1,\ldots,\xi_n)$,
  defining $\ham{H}^0$ as the symplectic gradient of $H$ induced by
  $\omega_0$, we have :
  \[
  \alpha_0(\ham{H}^0) = nH
  \]
\end{lemm}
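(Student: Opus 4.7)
The plan is a direct one-step computation using Euler's identity for positively homogeneous functions. In the canonical coordinates $(x_1,\dots,x_n,\xi_1,\dots,\xi_n)$ on $T^*\RM^n$ one has $\alpha_0=\sum_{i=1}^n \xi_i\,dx_i$ and $\omega_0=d\alpha_0=\sum_i d\xi_i\wedge dx_i$. With the sign convention $\iota_{\ham{H}^0}\omega_0=-dH$ used throughout the paper, the symplectic gradient of $H$ takes the standard explicit form
\[
\ham{H}^0=\sum_{i=1}^n\left(\frac{\partial H}{\partial \xi_i}\frac{\partial}{\partial x_i}-\frac{\partial H}{\partial x_i}\frac{\partial}{\partial \xi_i}\right).
\]

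Next I would contract $\alpha_0$ against $\ham{H}^0$. Since $\alpha_0$ involves only the $dx_i$, only the horizontal components of $\ham{H}^0$ survive the pairing, yielding
\[
\alpha_0(\ham{H}^0)=\sum_{i=1}^n \xi_i\,\frac{\partial H}{\partial \xi_i}.
\]

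Finally I would invoke Euler's identity: since $H$ is homogeneous of degree $n$ in the fiber variables $(\xi_1,\dots,\xi_n)$, one has $\sum_{i=1}^n \xi_i\,\partial_{\xi_i} H=nH$, which is precisely the claim. There is essentially no obstacle here: the lemma is a one-line consequence of Euler's identity, and the only thing requiring care is matching the paper's sign convention in the definition of $\ham{H}^0$ so that the factor coming out in front of $H$ has the correct sign. As a sanity check, for the case of interest $H=q_2=x_1\xi_2-x_2\xi_1$ (homogeneous of degree $1$ in $\xi$), this reproduces $\alpha_0(\ham{q_2}^0)=q_2$, consistent with the key formula $q_2=\frac{1}{2\pi}\int_{\gamma_z}\alpha_0$ stated just before the lemma.
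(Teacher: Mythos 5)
Your computation is correct, and it is consistent with the paper's sign convention $\imath_{\ham{H}^0}\omega_0=-dH$ (indeed your explicit formula for $\ham{H}^0$ reproduces the flows \eqref{eq-flow_C}), so the contraction $\alpha_0(\ham{H}^0)=\sum_i\xi_i\,\partial H/\partial\xi_i$ and Euler's identity finish the argument. The paper reaches the same conclusion by a slightly different, coordinate-free route: it introduces the fiber-dilation (Euler) vector field $\varXi=\sum_i\xi_i\partial_{\xi_i}$, notes $\mathcal{L}_{\varXi}\alpha_0=\alpha_0$ and $\imath_{\varXi}\alpha_0=0$, and deduces via Cartan's formula the tautological identity $\alpha_0=\imath_{\varXi}\omega_0$; then $\alpha_0(\ham{H}^0)=\omega_0(\varXi,\ham{H}^0)=dH(\varXi)=nH$ by Euler. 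Both proofs ultimately rest on Euler's identity for fiber-homogeneity; yours is more elementary and transparent but requires writing $\ham{H}^0$ in coordinates (hence the care with the sign convention you rightly flag), while the paper's avoids any explicit formula for $\ham{H}^0$ by exploiting the intrinsic relation between the Liouville form and the radial vector field, which is the more robust formulation if one wants to work invariantly on a general cotangent bundle. Either way the statement is established, and your sanity check on $q_2$ matches the action-integral formula used just before the lemma.
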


\begin{proof}
  Consider the $\R_{+}^*$-action on $T^*\RM^n$ given by multiplication
  on the cotangent fibers:
  $\varphi^t(x_1,\ldots,x_n,\xi_1,\ldots,\xi_n) =
  (x_1,\ldots,x_n,t\xi_1,\ldots,t\xi_n)$.  Since $\alpha_0=
  \sum_{i=1}^n \xi_i dx_i$, we have : $\varphi^{t*} \alpha_0= t
  \alpha_0$.  Taking the derivative with respect to $t$ gives
  $\mathcal{L}_{\varXi} \alpha_0= \alpha_0$ where $\varXi$ is the
  infinitesimal action of $\varphi^t$ :
  $\varXi=(0,\dots,0,\xi_1,\dots,\xi_n)$.  By Cartan's formula,
  $\imath_{\varXi} {d \alpha_0} + d(\imath_{\varXi} \alpha_0) =
  \alpha_0 $. But $\imath_{\varXi} \alpha_0 = \sum_{i=1}^n \xi_i
  dx_i(\varXi) = 0$ so $\alpha_0 = \imath_{\varXi} \omega_0$.

  Thus, $\alpha_0(\ham{H}^0) = \omega_0(\varXi,\ham{H}^0) =
  dH(\varXi)$, and since $H$ is a homogeneous function of degree $n$
  with respect to $\varXi$, Euler's formula gives
  $\mathcal{L}_{\varXi} H = dH(\varXi) = nH$. Therefore we get, as
  required~:
  \[ dH(\varXi) = nH(\varXi) \]
\end{proof}

From this lemma, we deduce, since $q_2$ is invariant under
$\ham{q_2}^0$ :
\[
\frac{1}{2\pi}\int_{\gamma_z} \!\!\! \alpha_0 = \int_0^1 \!\!\!
\alpha_{0 \gamma_2(t)} (\ham{q_2}^0) dt = \int_0^1 \!\!\!  q_2
(\gamma_z(t)) = q_2
\]

By the \emph{classic} flat Morse lemma we have a local diffeomorphism
$\Phi:\RM^4\to\RM^4$ such that $\Phi^*h_j = q_j$, $j=1,2$.  Let
$\alpha:=(\Phi^{-1})^*\alpha_0$, and let
\[
K(z):= \frac{1}{2\pi}\int_{\gamma_z} \!\!\! \alpha,
\]
and let $H:=K\circ \Phi$. Note that
\[
H(m) = \int_{\gamma_2 [m]} \!\!\! \alpha_0
\]
where $\gamma_2[m]:=\Phi^{-1}\circ\gamma_2[z=\Phi(m)]$, and
$H(0)=0$.

We prove now that $H$ is a hamiltonian moment map for an $S^1$-action
on $M$ leaving $(h_1,h_2)$ invariant.

\begin{lemm}
  $H\in\Cinf(\RM^4,0)$.
\end{lemm}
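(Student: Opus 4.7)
The plan is to use Stokes' theorem to convert the action integral
\[
K(z)=\frac{1}{2\pi}\int_{\gamma_{z}}\alpha
\]
into an integral of the smooth $2$-form $d\alpha=(\Phi^{-1})^{*}\omega_{0}$
over a smoothly parametrised family of disks that contracts to $\{0\}$ as
$z\to0$. Once this is done, $K$ will be smooth by differentiation under the
integral sign, and $H=K\circ\Phi$ will then be smooth as a composition of
smooth maps. The one subtle point to address is that the naive integrand of
$\int_{\gamma_{z}}\alpha$ lives on a loop that shrinks to a point at $z=0$,
so regularity at the origin is not visible in the original definition.

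Concretely, identifying $\RM^{4}$ with $\CM^{2}$ via the complex coordinates,
I would introduce for each $z=(z_{1},z_{2})$ in a small neighbourhood of the
origin the map
\[
j_{z}:\overline{\mathbb{D}}\fleche\RM^{4},\qquad j_{z}(\rho)=(\rho z_{1},\rho z_{2}),
\]
where $\overline{\mathbb{D}}\subset\CM$ is the closed unit disk. In view
of~\eqref{eq-flow_C}, the boundary $j_{z}(\partial\overline{\mathbb{D}})$ is
precisely the oriented loop $\gamma_{z}$, and the joint evaluation
$(\rho,z)\mapsto j_{z}(\rho)$ is smooth, degenerating to the constant map $0$
when $z=0$. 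For $z$ small enough, the image of $j_{z}$ is contained in the
domain of $\Phi^{-1}$, so Stokes' theorem yields
\[
K(z)=\frac{1}{2\pi}\int_{\overline{\mathbb{D}}}j_{z}^{*}\,d\alpha
=\frac{1}{2\pi}\int_{\overline{\mathbb{D}}}(\Phi^{-1}\circ j_{z})^{*}\omega_{0}.
\]

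Now the map $G(\rho,z):=\Phi^{-1}(\rho z_{1},\rho z_{2})$ is smooth on
$\overline{\mathbb{D}}\times V$ for some neighbourhood $V$ of $0$, so
the integrand, expressed via the standard area form on $\overline{\mathbb{D}}$,
is a smooth function of $(\rho,z)$. Since $\overline{\mathbb{D}}$ is compact,
differentiation under the integral sign is justified at all orders, giving
$K\in\Cinf(\RM^{4},0)$, and then $H=K\circ\Phi\in\Cinf(\RM^{4},0)$. The main
point of the argument is that Stokes' theorem replaces a singular-looking
integral over a vanishing loop by an integral of a smooth, bounded $2$-form
over a disk whose parametrisation is jointly smooth in $z$, so no special
analysis of the focus-focus singularity is required at this step.
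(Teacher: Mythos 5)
Your proof is correct and is essentially the paper's own argument: both desingularise $K$ by writing $\gamma_z$ as the boundary of the disk map $\zeta\mapsto(\zeta z_1,\zeta z_2)$, apply Stokes to replace the contracting loop integral by $\int_D j_z^*d\alpha$ with $d\alpha=(\Phi^{-1})^*\omega_0$, and conclude by joint smoothness of the integrand over the fixed compact disk, then compose with $\Phi$ to get $H$ smooth.
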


\begin{proof} Equivalently, we prove that $K\in\Cinf(\RM^4,0)$.  The
  difficulty lies in the fact that the family of ``loops''
  $\gamma_z$ is not locally trivial: it degenerates into a
  point when $z=0$. However it is easy to desingularize $K$, as
  follows. Again we identify $\RM^4$ with $\CM^2$; we introduce the
  maps~:

  \[ \begin{aligned}
    j:& \; \D \times \CM^2 \! \!& \longrightarrow & ~\CM^2                  & \; &    j_z: &\D & \to M   \\
    \! & (\zeta,(z_1,z_2)) \! \!& \mapsto & (\zeta z_1, \zeta z_2) &
    \; & \;\; &\zeta & \mapsto (\zeta z_1, \zeta z_2)
  \end{aligned}
  \]
  so that $\gamma_z = (j_z)_{\restr U(1)}$. Let $D\subset\CM$
  be the closed unit disc $\{\zeta\leq 1\}$. Thus
  \[
  \int_{\gamma_z} \alpha = \int_{j_z(U(1))}\alpha = \int_{U(1)}
  j_z^*\alpha = \int_{\partial D} j_z^*\alpha.
  \]
  Let $\omega:=d\alpha$. By Stokes' formula,
  \[
  \int_{\partial D} j_z^*\alpha = \int_D \!\!\!  j_z^* \omega = \int_D
  \!\!\! \omega_{j(z,\zeta)} (d_{\zeta} j(z,\zeta) (\cdot),d_{\zeta}
  j(z,\zeta) (\cdot)).
  \]
  Since $D$ is a fixed compact set and $\omega$, $j$ are smooth, we
  get $K\in\Cinf(\RM^4,0)$.
\end{proof}

Consider now the integrable system $(h_1,h_2)$.  Since $H$ is an
action integral for the Liouville 1-form $\alpha_0$, it follows from
the action-angle theorem by Liouville-Arnold-Mineur that the
hamiltonian flow of $H$ preserves the regular Liouville tori of
$(h_1,h_2)$. Thus, $\{H,h_j\}=0$ for $j=1,2$ on every regular
torus. The function $\{H,h_j\}$ being smooth hence continuous,
$\{H,h_j\}=0$ everywhere it is defined~: $H$ is locally constant on
every level set of the joint moment map $(h_1,h_2)$. Equivalently, $K$
is locally constant on the level sets of $q=(q_1,q_2)$. It is easy to
check that these level sets are locally connected near the
origin. Thus there exists a map $g:(\RM^2,0)\fleche \RM$ such that
\[
K = g\circ q.
\]
It is easy to see that $g$ must be smooth~: indeed, $K$ itself is
smooth and, in view of \eqref{equ:focus-focus}, one can write
\begin{equation}
  g(c_1,c_2) = K(x_1=c_1,x_2=-c_2,\xi_1=1,\xi_2=0).
  \label{equ:g-smooth}
\end{equation}
We claim that the function $(c_1,c_2)\mapsto g(c_1,c_2)-c_2$ is flat
at the origin~: since $\Phi = id + \error (\infty)$, we have~: $
\alpha = \Phi^* \alpha_0 = \alpha_0 + \error (\infty)$ so $K(z) =
\int_{\gamma_2 [z] } \alpha_0 + \error (\infty) = q_2 + \eta(z)$
with $\eta$ a flat function of the 4 variables. We show now the
lemma~:
\begin{lemm}
  \label{lemm:mu-flat}
  Let $\eta \in \Cinf(\R^4; \R)$ be a flat function at the origin in
  $\RM^4$ such that $\eta(z) = \mu(q_1,q_2)$ for some map
  $\mu:\RM^2\fleche\RM$. Then $\mu$ is flat at the origin in $\RM^2$.
\end{lemm}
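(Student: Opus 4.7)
The plan is to exploit the fact that the map $q=(q_1,q_2)\colon \RM^4\to\RM^2$ admits preimages of small norm compared to $|c|$, and then transfer the flat estimates from $\eta$ to $\mu$.

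First I would pass to the complex coordinates $z_1=x_1+ix_2$, $z_2=\xi_1+i\xi_2$ so that $q=q_1+iq_2=\bar z_1 z_2$. For any target value $c:=c_1+ic_2\in\CM$ with $c\ne 0$, choose for example $z_1:=\sqrt{|c|}\in\RM$ and $z_2:=c/\sqrt{|c|}$. Then $\bar z_1 z_2 = c$, so the point $z(c):=(z_1,z_2)$ satisfies $q_1(z(c))=c_1$ and $q_2(z(c))=c_2$, while
\[
\|z(c)\|_2^2 = |z_1|^2+|z_2|^2 = 2|c| = 2(c_1^2+c_2^2)^{1/2}.
\]
This is the key geometric input: we have produced a preimage whose norm is controlled by $|c|^{1/2}$, \emph{not} by $|c|$ (which would be what one would naively expect from a regular submersion). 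The square-root loss here is exactly the signature of the focus-focus singularity.

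Next, since $\eta$ is flat at the origin, by Definition~\ref{de-plate} for each integer $N\geq 0$ there is a constant $C_N>0$ such that $|\eta(z)|\leq C_N \|z\|_2^{N}$ in a neighbourhood of $0$. Evaluating at the preimage $z(c)$ constructed above gives, for $c$ small enough,
\[
|\mu(c_1,c_2)| \;=\; |\eta(z(c))| \;\leq\; C_N\,\|z(c)\|_2^{N} \;=\; 2^{N/2} C_N\, (c_1^2+c_2^2)^{N/4}.
\]
The case $c=0$ is covered since $\mu(0)=\eta(0)=0$.

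Finally, since $N$ is arbitrary, replacing $N$ by $4N$ yields $|\mu(c)| \leq C'_N (c_1^2+c_2^2)^{N}$ for every $N\geq 0$. In the application of the lemma $\mu = g - c_2$ is a smooth function on $\RM^2$ (by the explicit formula~\eqref{equ:g-smooth}), so these pointwise estimates imply via Definition~\ref{de-plate} that $\mu$ is flat at the origin. The main obstacle is really just the observation about square-root preimages in the first step; everything else is bookkeeping.
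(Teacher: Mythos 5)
Your proof is correct and follows essentially the same route as the paper: pick the preimage $z_1=\abs{c}^{1/2}$, $z_2=c/z_1$ so that $\norm{(z_1,z_2)}^2=2\abs{c}$, apply the flat estimates for $\eta$ at this point, use the arbitrariness of $N$, and invoke~\eqref{equ:g-smooth} for the smoothness of $\mu$. If anything, your bookkeeping of the square-root loss ($\abs{c}^{N/2}$ rather than $\abs{c}^N$, absorbed by replacing $N$ with a larger integer) is slightly more careful than the paper's own write-up.
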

\begin{proof}
  We already know from~\eqref{equ:g-smooth} that $\mu$ has to be
  smooth. Thus, it is enough to show the estimates~: $\forall N \in
  \N, \exists C_N \in \R$ such that
  \[
  \forall (c_1,c_2) \in \R^2, \qquad \abs{\eta(c_1,c_2)} \leq C_N
  \norm{(c_1,c_2)}^N = C_N (\abs{c_1}^2 + \abs{c_2}^2)^{N/2}.
  \]
  Since $\eta$ is flat, we have, for some constant $C_N$,
  \[
  \abs{\eta(x_1,x_2,\xi_1,\xi_2)}\leq {C_N}
  \norm{(x_1,x_2,\xi_1,\xi_2)}^N.
  \]
  But for any $c=(c_1+ic_2)\in\CM$, there exists $(z_1,z_2)\in
  q^{-1}(c)$ such that $2\abs{c}^2=\abs{z_1}^2+\abs{z_2}^2$~: if $c=0$
  we take $z=0$, otherwise take $z_1:=\abs{c}^{1/2}$ and $z_2:=c/z_1$,
  so that $q(z_1,z_2)=\bar{z}_1z_2 = c$.

  Therefore, for all $(c_1,c_2) \in \R^2$ we can write
  \[
  |\mu(c_1,c_2)| = |\eta(z_1,z_2) | \leq C_N \| (z_1,z_2) \|^N \leq 2
  C_N |c|^N,
  \]
  which finishes the proof.
\end{proof}

We have now :

\[ \begin{pmatrix} h_1 \\ H \end{pmatrix} = \begin{pmatrix} h_1 \\ h_2
  + \mu(h_1,h_2) \end{pmatrix} \]

By the implicit function theorem, the function $V:(x,y) \mapsto (x,y +
\mu(x,y)$ is locally invertible around the origin, since $\mu$ is flat; moreover,
$V^{-1}$ is infinitely tangent to the identity. Therefore, in view of
the statement of Theorem~\ref{theo:morse}, we can replace our
initial integrable system $(h_1,h_2)$ by the system
$V\circ(h_1,h_2)=(h_1,H)$.

Thus, we have reduced our problem to the case where $h_2=H$ is a
hamiltonian for a smooth $S^1$ action on $\RM^4$. We denote by $S^1_H$
this action. The origin is a fixed point, and we denote by
$\textup{lin}(S^1_H)$ the action linearized at the origin. We
now invoke an equivariant form of the Darboux theorem.
\begin{theo}[Darboux-Weinstein~\cite{chaperon-asterique}]

  There exists $\varphi$ a diffeomorphism of $(\R^4,0)$ such that :

  \[ \left(\R^4,\omega_0,{S}_H^1 \right) \xrightarrow{\varphi}
  \left(T_0\R^4,T_0\omega_0,\textup{lin}({S}_H^1) \right) \]
 
\end{theo}

The linearization $T_0 \omega_0$ of $\omega_0$ is $\omega_0$ :
$\varphi$ is a symplectomorphism, and the linearization of the
${S}^1$-action of $H$ is the ${S}^1$-action of the quadratic part of
$H$, which is $q_2$. Hence $H\circ\phy^{-1}$ and $q_2$ have the same
symplectic gradient, and both vanish at the origin~: so $H \circ
\varphi^{-1} = q_2$. So we have got rid of the flat part of $h_2$
without modifying the symplectic form. The last step is to give a
precised version of the equivariant flat Morse lemma~:

\begin{lemm}
  \label{lemma-Morseplat-precise}
  Let $h_1, h_2$ be functions in $\Cinf (\R^4,0)$ such that
  \[ \begin{cases}
    h_1 = q_1 + \error (\infty) \\
    h_2 = q_2
  \end{cases}
  \]

  Then there exists a local diffeomorphism $\Upsilon$ of $(\R^2,0)$ of
  the form $ \Upsilon = id +\error (\infty) $ such that
  \[
  \Upsilon^* h_i = q_i \; , \; i=1,2
  \]

  Moreover, we can choose $\Upsilon$ such that the symplectic gradient
  of $q_2$ for $\omega_0$ and for $\Upsilon^* \omega_0$ are equal,
  which we can write as
  \[
  (\mathcal{P}): \; \; \imath_{\ham{q_2}^0}(\Upsilon^* \omega_0) = -
  dq_2
  \]
  That is, $\Upsilon$ preserves the $S^1$-action generated by $q_2$.
\end{lemm}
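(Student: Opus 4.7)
The idea is to rerun the Moser path argument used for the classical flat Morse lemma, but to work with an $S^1$-invariant Moser vector field so that the resulting time-$1$ flow automatically commutes with the $q_2$-action.

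\textbf{Setup.} Since $h_2 = q_2$ exactly, the obstruction $R := Q - H$ has the form $R = (q_1 - h_1, 0)$ with the first component in $\error(\infty)$. Set $H_t := (1-t)Q + tH = ((1-t)q_1 + th_1,\ q_2)$. Because $\{h_1,q_2\} = 0$ and $\{q_1,q_2\}=0$, both components of $H_t$ are invariant under the $S^1$-action $\varphi^s_{q_2}$, and so is $R$.

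\textbf{Moser vector field and averaging.} Apply verbatim the construction from the proof of the classical flat Morse lemma: with $\Psi$ the right inverse of $dQ$ from Lemma~\ref{lemm:operator} and $A := \Psi\circ dR$, set $X_t := (\textup{Id}-tA)^{-1}\Psi(R)$ on a small enough neighbourhood $\Omega$ of the origin. Then $X_t$ solves $dH_t(X_t) = R$ and is uniformly flat for $t\in[0,1]$. Define
\[
\tilde X_t := \frac{1}{2\pi}\int_0^{2\pi} (\varphi^s_{q_2})^* X_t \, ds.
\]
Because $H_t$ and $R$ are $S^1$-invariant, for every $s$ one has $dH_t\bigl((\varphi^s_{q_2})^* X_t\bigr) = (\varphi^s_{q_2})^*(dH_t(X_t)) = (\varphi^s_{q_2})^* R = R$; averaging preserves the Moser equation, so $dH_t(\tilde X_t) = R$. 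Averaging also preserves uniform flatness (the integrand is uniformly flat in $s$ by Lemma~\ref{lemma-error}), and by construction $[\tilde X_t, \ham{q_2}^0] = 0$, i.e.\ $\tilde X_t$ is $S^1$-invariant.

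\textbf{Conclusion.} Let $\Upsilon := \varphi^1_{\tilde X_t}$. By Moser's standard calculation, $\Upsilon^* H = Q$, and the uniform flatness of $\tilde X_t$ gives $\Upsilon = \textup{Id} + \error(\infty)$. Since the generator $\tilde X_t$ commutes with $\ham{q_2}^0$, the flow $\Upsilon$ commutes with every $\varphi^s_{q_2}$, hence $\Upsilon_* \ham{q_2}^0 = \ham{q_2}^0$. Combined with $\Upsilon^* q_2 = q_2$, this yields
\[
\imath_{\ham{q_2}^0}(\Upsilon^*\omega_0) = \Upsilon^*\bigl(\imath_{\Upsilon_*\ham{q_2}^0}\omega_0\bigr) = \Upsilon^*(-dq_2) = -dq_2,
\]
which is exactly property $(\mathcal{P})$.

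\textbf{Main obstacle.} The only substantive point beyond the classical Morse lemma is to verify that the averaging procedure behaves well: that it preserves the Moser equation (this uses $S^1$-invariance of $H_t$ and $R$, which in turn relies on the assumption $h_2 = q_2$ being exact, not merely flat), and that it preserves uniform flatness in the local topology defined in the proof of the classical case. Both reduce to the observation that the $S^1$-action is by a fixed-time compact family of diffeomorphisms fixing the origin, so the estimates~\eqref{equ:estimate} for the integrand are uniform in $s$.
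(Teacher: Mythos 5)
Your proposal is correct and is essentially the paper's own argument: run the Moser scheme of the classical flat Morse lemma (possible since $h_2=q_2$ exactly, so the second equation is $dq_2(X_t)=0$), average the Moser vector field over the $S^1$-action of $q_2$ using the invariance of $q_1$, $q_2$ and $r_1=q_1-h_1$ (which relies on the commutation $\{h_1,q_2\}=0$), and deduce that the time-$1$ flow commutes with $\ham{q_2}^0$, giving $(\mathcal{P})$ by naturality. The only cosmetic difference is that the paper phrases the preservation of the averaged equation via the identity $\crochet{X_t}f=\crochet{X_tf}$ for $S^1$-invariant $f$, which is the same computation you give.
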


\begin{proof}
  Following the same Moser's path method we used in the proof of the
  classical flat Morse lemma, we come up with the following
  cohomological equation~\eqref{equ:moser}
  \[
  (Z) \begin{cases}
    (dq_1 +t dr_1) (X_t) = r_1 \\
    dq_2(X_t) = 0
  \end{cases}
  \]
  The classical flat Morse lemma ensures the existence of a solution
  $X_t$ to this system. We have then that $\{ r_1,q_2 \} = \{ r_2 ,
  q_1 \} = 0$, because here $r_2 = 0$. We have also that : $\{ q_1,q_2
  \} = 0$, $\{ q_2,q_2 \} = 0$ , so $r_1$,$q_1$ and $q_2$ are
  invariant by the flow of $q_2$. So we can average $(Z)$ by the
  action of $q_2$~: let $\phy_2^s:=\phy_{\ham{q_2}^0}^s$ be the time
  $s$-flow of the vector field $\ham{q_2}^0$ and let
  \[
  \crochet{X_t} := \frac{1}{2\pi}\int_0^{2\pi} (\phy_2^s)^* X_t ds.
  \]
  If a function $f$ is invariant under $\phy_2^s$, \emph{i.e.}
  $(\phy_2^s)^*f = f$, then
  \[
  ((\phy_2^s)^*X_t)f= ((\phy_2^s)^*X_t)((\phy_2^{s})^*f) =
  (\phy_2^s)^*(X_tf).
  \]
  Integrating over $s\in[0,2\pi]$, we get $\crochet{X_t}f =
  \crochet{X_tf}$, where the latter is the standard average of
  functions.  Therefore $\crochet{X_t}$ satisfies the system (Z) as
  well.

  Finally, we have, for any $s$, $(\varphi_2^{t})^* \crochet{X_t} =
  \crochet{X_t}$ which implies
  \begin{equation}
    [\ham{q_2}^0,\crochet{X_t}]=0;
    \label{equ:crochet} 
  \end{equation}
  in turn, if we let $\phy_{\crochet{X_t}}^t$ be the flow of the
  non-autonomous vector field $\crochet{X_t}$,
  integrating~\eqref{equ:crochet} with respect to $t$ gives
  $(\phy_{\crochet{X_t}}^t)^*\ham{q_2}^0=\ham{q_2}^0$. For $t=1$ we
  get $\Upsilon^* \ham{q_2}^0=\ham{q_2}^0$. But, by naturality
  $\Upsilon^* \ham{q_2}^0$ is the symplectic gradient of
  $\Upsilon^*q_2=q_2$ with respect to the symplectic form
  $\Upsilon^*\omega_0=\omega$, so property $(\mathcal{P})$ is
  satisfied.
\end{proof}

\section{Principal lemma }

\subsection{Division lemma}

The following cohomological equation, formally similar to
to~\eqref{equ:birkhoff-cohomo}, is the core of
Theorem~\ref{theo-principal}.
\begin{theo}
  \label{theo:division}
  Let $r_1, r_2 \in \mathcal{C}^{\infty} ((\R^4,0);\RM)$, flat at the
  origin such that $\{r_1,q_2\} = \{r_2,q_1\}$.  Then there exists $f
  \in \mathcal{C}^{\infty} ( (\R^4,0); \RM )$ and $\phi_2 \in \Cinf
  ((\R^2,0); \RM ) $ such that

  \begin{equation}
    \label{eq-cohom}
    \begin{cases}
      \{ f, q_1 \} (x,\xi) =  r_1 \\
      \{ f, q_2 \} (x,\xi) = r_2 - \phi_2(q_1,q_2),
    \end{cases}
  \end{equation}
  and $f$ and $\phi_2$ are flat at the origin.  Moreover $\phi_2$ is
  unique and given by
  \begin{equation}
    \forall z\in \RM^4, \quad \phi_2(q_1(z),q_2(z)) = \frac{1}{2\pi}
    \int_0^{2\pi} (\varphi_{q_2}^{s})^* r_2(z) ds,
    \label{equ:phy-average}
  \end{equation}
  where $s\mapsto\phy_{q_2}^s$ is the hamiltonian flow of $q_2$.
\end{theo}

One can compare the difficulty to solve this equation in the 1D
hyperbolic case with elliptic cases. If the flow is periodic, that is,
if $SO(q)$ is compact, then one can solve the cohomological equation
by averaging over the action of $SO(q)$. This is what happens in the
elliptic case. But for a hyperbolic singularity, $SO(q)$ is not
compact anymore, and the solution is more technical
(see~\cite{colin-vey}). In our focus-focus case, we have to solve
simultaneously two cohomological equations, one of which yields a
compact group action while the other doesn't. This time again, it is
the ``cross commuting relation'' $\{r_i,q_j\} = \{r_j,q_i\}$ that we
already encountered in the formal context that will allow us to solve
simultaneously both equations.

\begin{proof}

  Let $\varphi_{q_1}^s, \varphi_{q_2}^s$ be respectively the flows of
  $q_1$ and $q_2$. From \eqref{eq-flow_C}, we see $\varphi_{q_2}$ is
  $2\pi$-periodic : $q_2$ is a momentum map of a hamiltonian
  $\mathbb{S}^1$-action. Since
  $(\phy_{q_2}^s)^*\{f,q_2\}=-\frac{d}{dt}\left[(\phy_{q_2}^s)^*f \right]$, the
  integral of $\{f,q_2\}$ along a periodic orbit vanishes,
  and~\eqref{equ:phy-average} is a necessary consequence
  of~\eqref{eq-cohom}.  We set, for all $z\in\RM^4$ :
  \[
  h_2(z) = \frac{1}{2\pi} \int_0^{2\pi} (\varphi_{q_2}^{s})^* r_2(z)
  ds.
  \]
  Notice that $h_2$ is smooth and flat at the origin.  One has
  $\{h_2,q_2\} = 0 $ and
  \[
  \begin{aligned}
    \{h_2,q_1\} & = \frac{1}{2\pi} \int_0^{2\pi} \{ (\varphi_{q_2}^{s})^*r_2,q_1 \} ds = \frac{1}{2\pi} \int_0^{2\pi} \{ (\varphi_{q_2}^{s})^*r_2, (\varphi_{q_2}^{s})^* q_1 \} ds \\
    & = \frac{1}{2\pi} \int_0^{2\pi} (\varphi_{q_2}^{s})^*
    \underbrace{\{ r_2,q_1 \}}_{= \{r_1,q_2\}} ds = \left\lbrace
      \frac{1}{2\pi} \int_0^{2\pi} (\varphi_{q_2}^{s})^* r_1 ds , q_2
    \right\rbrace = 0
  \end{aligned}
  \]

  Thus $dh_2$ vanishes on the vector fields $\ham{q_1}$ and
  $\ham{q_2}$, which implies that $h_2$ is locally constant on the
  smooth parts of the level sets $(q_1,q_2)=\text{const}$. As before
  (Lemma~\ref{lemm:mu-flat}) this entails that there is a unique germ
  of function $\phi_2$ on $(\RM^2,0)$ such that $h_2= \phi_2
  (q_1,q_2)$; what's more $\phi_2$ is smooth in a neighbourhood of the
  origin, and flat at the origin.

  Next, we define $\check{r}_2 = r_2 - h_2$ and
  \[
  f_2(z) = -\frac{1}{2\pi} \int_0^{2\pi} s(\varphi_{q_2}^{s})^* (
  \check{r}_2(z) ) ds.
  \]
  Then $f_2\in \mathcal{C}^{\infty} ((\R^4,0);\RM)_{\text{flat}}$, and
  we compute~:
  \begin{align*}
    \{q_2,f_2 \} & = -\frac{1}{2\pi} \int_0^{2\pi} s \{q_2,(\varphi_{q_2}^{s})^* \check{r}_2 \} ds \\
    & = -\frac{1}{2\pi} \int_0^{2\pi} s (\varphi_{q_2}^{s})^*
    \{q_2,\check{r}_2\} ds
    = -\frac{1}{2\pi} \int_0^{2\pi} s \frac{d}{ds} ((\varphi_{q_2}^{s})^* \check{r}_2 ) ds \\
    & = -\frac{1}{2\pi} \left[ s(\varphi_{q_2}^{s})^* \check{r}_2
    \right]_{s=0}^{s=2\pi} + \frac{1}{2\pi} \underbrace{\int_0^{2\pi}
      (\varphi_{q_2}^{s})^* \check{r}_2 ds}_{=0} = -\check{r}_2 =
    \phi_2(q_1,q_2) - r_2.
  \end{align*}

  In the last line we have integrated by parts and used that
  $\check{r}_2$ has vanishing $\mathbb{S}^1$-average.  Hence $f_2$ is
  solution of $\{f_2,q_2\} = r_2 - \phi_2(q_1,q_2)$ and $\{ f_2 , q_1
  \} = 0$. So
  \[
  \text{\eqref{eq-cohom}} \Leftrightarrow \begin{cases} \{f - f_2, q_1
    \} = r_1 \\ \{f-f_2,q_2\} =0 \end{cases}
  \]
  So we managed to reduce the initial cohomological
  equations~\eqref{eq-cohom} to the case $r_2 = \phi_2 = 0$; In other
  words, upon replacing $f-f_2$ by $f$, we need to solve
  \begin{equation}
    \label{equ:eq-cohom-bis}
    \begin{cases} \{f , q_1 \}
      = r_1 \\ \{f,q_2\} =0 \end{cases}
  \end{equation}
  and the cocycle condition simply becomes $\{r_1,q_2\} =0$.

  Now, in order to solve the first cohomological equation
  of~\eqref{equ:eq-cohom-bis}, we shall reduce our problem to the case
  where $r_1$ is a flat function on the planes $z_1 =0$ and $z_2 =
  0$. For this we shall adapt the technique used by Colin de Verdière
  and Vey in \cite{colin-vey}, but in a 4-dimensional setting.

  Let $z_1 = r e^{it}$ and $z_2 = \rho e^{i\theta}$. We introduce the
  usual vector fields~:
  \[
  r\deriv{}{r} = {x_1} \deriv{}{x_1} + {x_2} \deriv{}{x_2} \; , \qquad
  \deriv{}{t} = - x_2 \deriv{}{x_1} + x_1 \deriv{}{x_2} \]
  \[ \rho\deriv{}{\rho} = {\xi_1} \deriv{}{\xi_1} + {\xi_2}
  \deriv{}{\xi_2} \; , \qquad \deriv{}{\theta} = - \xi_2
  \deriv{}{\xi_1} + \xi_1 \deriv{}{\xi_2}.
  \]
  Remember that in complex notation $q = q_1 + i q_2 =
  \bar{z_1}z_2$. In polar coordinates, our system becomes
  \begin{equation}
    \label{equ:eq-cohom_pol}
    \text{\eqref{equ:eq-cohom-bis}} \Leftrightarrow 
    \begin{cases} r \deriv{f}{r} - \rho \deriv{f}{\rho} = r_1\\
      \left( \deriv{f}{t} + \deriv{f}{\theta} \right) = 0,
    \end{cases}
  \end{equation}
  and the cross-commuting relation becomes: $\left( \deriv{}{t} +
    \deriv{}{\theta} \right) r_1 = 0$.

  We first determine the Taylor series of $f$ along the $z_2 =0$
  plane; we denote for any function $h \in \mathcal{C}^{\infty} (\R^4;
  \R)$
  \[ [h]_{\ell_1,\ell_2} (x_1,x_2) := \frac{\partial^{\ell_1+\ell_2} h}{\partial
    \xi_1^{\ell_1} \partial \xi_2^{\ell_2}} (x_1,x_2,0,0).
  \]
  Now, when applying $ \ell_1$ times $\deriv{}{\xi_1}$ and $\ell_2$
  times $\deriv{}{\xi_2}$ to the equations~\eqref{equ:eq-cohom_pol},
  and then setting $\xi_1 = \xi_2 = 0$, one gets
  \begin{equation*}
    \begin{cases}
      r \deriv{[f]_{\ell_1,\ell_2}}{r} (x_1,x_2) - (\ell_1 + \ell_2 ) [f]_{\ell_1,\ell_2} (x_1,x_2) = [r_1]_{\ell_1,\ell_2} (x_1,x_2) \\
      \deriv{[f]_{\ell_1,\ell_2}}{t} = 0 \\
      \deriv{[r_1]_{\ell_1,\ell_2}}{t} = 0 \\
    \end{cases}
  \end{equation*}

  So the second and third equation tells us that
  $[r_1]_{\ell_1,\ell_2}$ and $[f]_{\ell_1,\ell_2}$ can be written as
  continuous functions of
  $r$% , everywhere except in $r=0$ and thus, by continuity in $r=0$
  . We set $[r_1]_{\ell_1,\ell_2} (x_1,x_2) = [R_1]_{\ell_1,\ell_2}
  (\sqrt{x_1^2 + x_2^2})$ and $[f]_{\ell_1,\ell_2} (x_1,x_2) =
  [F]_{\ell_1,\ell_2} (\sqrt{x_1^2 + x_2^2})$. The equation satisfied
  by $[F]_{\ell_1,\ell_2}$ and $[R_1]_{\ell_1,1_2}$ is actually an
  ordinary differential equation of the real variable $r$, which
  admits as a solution

  \[ [F]_{\ell_1,\ell_2} (r) = \int_0^1 t^{-(\ell_1+\ell_2+1)}
  [R_1]_{\ell_1,\ell_2} (t r) dt \] For any $t\geq 0$,
  $[R_1]_{\ell_1,1_2}(t)=[r_1]_{\ell_1,\ell_2} (t,0)$; hence
  $[R_1]_{\ell_1,1_2}$ is smooth on $\RM^+$, and flat at the
  origin. This implies that the above integral is convergent for any
  $(\ell_1,\ell_2)$, and defines a smooth function of $r\geq 0$, which is also flat when $r \to 0^+$.  We
  shall now check that $[F]_{\ell_1,\ell_2} (\sqrt{x_1^2 + x_2^2})$ is
  a smooth (and flat) as a function of $x_1$ and $x_2$. Obviously, the
  only problem that can occur is at $(0,0)$, since the square root is
  not a smooth function at the origin. Yet, we can see with the help
  of the Faa Di Bruno formula that, for any smooth function $F$ on
  $(0,\infty)$,

\[
\frac{d^n}{dx^n} \left[ F (\sqrt{x}) \right] = \sum_{k=0}^n
F^{(n)}(\sqrt{x})
B_{n,k}(\frac{d}{dx}(\sqrt{x}),\frac{d^2}{dx^2}(\sqrt{x}),\ldots,\frac{d^n}{dx^n}
(\sqrt{x})).
\]

(Here $B_{n,k}$ designate the $(n,k)$-th Bell polynomial; of course we
don't need its exact value). Since $\frac{d^n}{dx^n} (\sqrt{x}) =
\frac{(-1)^n (2n-1)!}{2^{2n-1} (n-1)!} x^{\frac{1}{2}-n} $, we have in
fact that $\frac{d^n}{dx^n} \left[ F \circ (\sqrt{x}) \right]$ is a
finite sum of terms of the form

\[ C_j \frac{F^{(j)} (\sqrt{x})} {x^{\frac{N_j}{2}}} \; \text{ with }
C_j \in \R \text{ and } N_j \in \N. \]

Since, in our case, $F$ is in fact flat when $r \to 0^+$, all of these terms tends
to $0$ when $x$ does, so $F \circ \sqrt{.}$ is a smooth, and actually
flat function. Hence $[f]_{\ell_1,\ell_2}$ is a flat function of the
variables $(x_1,x_2)$. % We can extend this jet of
Now, invoking Borel's lemma, let $u_1 \in \mathcal{C}^{\infty} (\R^4;
\R)$, whose Taylor expansion in the $\xi$ variables is
\[ \mathcal{T}_{(\xi_1,\xi_2)} {u_1} = \sum_{\ell_1,\ell_2} \frac{1}{
  \ell_1!  \ell_2!} [f]_{\ell_1,\ell_2} (x_1,x_2) \xi_1^{\ell_1}
\xi_2^{\ell_2}. \]

We have that $u_1$ is flat all along the axis $z_1 =0$. One can always
symmetrize this last function by the action of $q_2$. We still get a
flat function on $z_1 =0$. Let's review the properties of $u_1$

\begin{itemize}
\item $r_1 - \{u_1,q_1\}$ is flat on $z_2 = 0$ by construction of
  $u_1$
\item $u_1$ and therefore $\{u_1,q_1\}$ are flat on $z_1 =
  0$% , so the jet of $r_1 - \phi_1 -
\item $\{u_1,q_2\} = 0$ on the whole space.
\end{itemize}

On can construct by the same process $u_2$, flat all along $z_2 =0$
and such that $ \{u_2,q_1\} - r_1$ is flat on $z_1 = 0$, and $
\{u_2,q_2\} = 0$ the whole space. The cohomological
equations~\eqref{equ:eq-cohom-bis} are now equivalent to
\[
\begin{cases}
  \{ f- u_1 - u_2 , q_1 \} = r_1 - \{u_1+u_2,q_1\} \\
  \{f - u_1 - u_2 , q_2 \} = - \{u_1+u_2,q_2\} = 0.
\end{cases}
\]
Notice that $r_1 - \{u_1+u_2,q_1\} =r_1 - \{u_1,q_1\} - \{u_2,q_1\}$
is flat on \emph{both} planes $z_1 = 0$ and $z_2 = 0$.  Thus,
replacing $f-u_1-u_2$ by $f$ again, we are now reduced to the case
where $r_1$ is a flat function on both planes $z_1=0$ and
$z_2=0$. We'll now solve the cohomological equation away from $z_1 =
0$ or $z_2 =0$ and extend it to these planes. % Let's first try to see
If $f$ is a solution, it satisfies the transport equation
\[ -\frac{d}{dt} \left.\left[ (\varphi_{q_1}^{t})^* f
  \right]\right|_{t=s} = (\varphi_{q_1}^{s})^* \{f,q_1\} =
(\varphi_{q_1}^{s})^* r_1.
\]
Integrating between $0$ and a time $T$ depending on $z_1$ and $z_2$,
we get
\[ f - (\varphi_{q_1}^{T})^* f = \int_0^T (\varphi_{q_1}^{s})^* r_1
ds. \]

Looking for a solution $f$ of the form $f = \int_0^T
(\varphi_{q_1}^{s})^* r_1 ds$, we get
\[
\{f,q_1\} = r_1 - (1+\{q_1,T\})(\phy_{q_1}^T)^* r_1
\]
and, using $\{r_1,q_2\}=0$,
\[
\{f,q_2\} = \{T,q_2\}(\phy_{q_1}^T)^* r_1.
\]
Thus, such an $f$ will give us a solution to both cohomological
equations if $T$ satisfies
\begin{equation}
  \label{eq-T}
  \begin{cases}
    \{T,q_1\} = 1 \\
    \{T,q_2\} = 0
  \end{cases}
\end{equation}
Notice that this can be easily understood in a geometrical manner near
a \emph{regular} point of the map $(q_1,q_2)$, in the following
way. We fix a hypersurface $\mathcal{T}_0$ transversal to the flow of
$q_1$ and invariant under the flow of $q_2$; then $T(z)$ is the
(locally unique) time such that $\phy_{q_1}^{-T(z)}(z)\in
\mathcal{T}_0$.

Here, we may take $T = \frac{1}{4} \left[ \ln|z_2|^2 - \ln|z_1|^2
\right]$, which corresponds to the hypersurface $\abs{z_1}=\abs{z_2}$,
and gives a smooth solution on each connected component of $\C^2
\setminus \{q =0 \}$ (singularities of $T$ are exactly the zero locus
of $q$).

The last thing one has to check is that this solution can be extended
to the whole space as a smooth solution; actually, $f$ will be flat on
the two complex axis $z_1 =0$ and $z_2 =0$. Let's denote a derivation
of arbitrary degree in the four variables $z_1,
\bar{z_1},z_2,\bar{z_2}$ by
\[
L_{\alpha \beta} :=
\frac{\partial^{\abs{\alpha}+\abs{\beta}}}{\partial
  z_1^{\alpha_1} \partial \bar{z_1}^{\beta_1} \partial
  z_2^{\alpha_2} \partial \bar{z_2}^{\beta_2}}.
\]
We can then write
\begin{equation}
  \begin{aligned}
    \label{equ:Lab}
    L_{\alpha \beta} f = & \int_0^T (L_{\alpha \beta} r_1) (e^s z_1, e^{-s} z_2) e^{ks} ds \\
    + & \left\langle \left( L_{\gamma \delta}T \right)^\ell
      \left(L_{\gamma' \delta'} r_1\right) (e^T z_1,e^{-T} z_2) e^{mT}
    \right\rangle
  \end{aligned}
\end{equation}
with $k \in \ZM$ depending on ${(\alpha, \beta)}$. The term between
brackets designates a finite sum of terms of the generical form inside
the bracket, where the number of terms and the exact values of
$\gamma,\delta,\gamma',\delta',\ell$ and $m$ depend on ${(\alpha,
  \beta)}$.

We will make use of the following fact~: if a smooth function
$r(z_1,z_2)$ is flat on $z_1=0$, then for any $N_1\geq 0$, the
function $r(z_1,z_2)/\abs{z_1}^{N_1}$ is still smooth and flat on
$z_1=0$. Of course the corresponding statement holds if $r$ is flat on
the plane $z_2=0$. Thus, when $r$ is flat on both planes $z_1=0$,
$z_2=0$, the function $r(z_1,z_2)/\abs{z_1}^N\abs{z_2}^{N_2}$ is again
smooth and flat on both planes. Therefore, given any $N_1$, $N_2$ and
any bounded region for $(z_1,z_2)$, there exists a constant $C>0$ such
that
\begin{equation}
  \abs{r(z_1,z_2)}\leq C \abs{z_1}^{N_1}\abs{z_2}^{N_2}.\label{equ:flat-flat}
\end{equation}

We return now to the terms in~\eqref{equ:Lab}.  Notice that, by
construction,
\[
\max(\abs{e^sz_1},\abs{e^{-s}z_2})\leq \max(\abs{z_1}, \abs{z_2}),
\]
when $s$ varies between $0$ and $T$. This is clear also from the
geometric picture.  Thus, using that $L_{\alpha\beta}r_1$ is flat
along $z_1=0$ and $z_2=0$ we have from~\eqref{equ:flat-flat}, for
bounded $(z_1,z_2)$ and for any $N_1,N_2$, a constant $C>0$ such that
\begin{align}
  \label{equ:flat-flat-bis}
  \abs{(L_{\alpha\beta}r_1)(e^{s} z_1, e^{-s} z_2)} & \leq
  C(e^{s}\abs{z_1})^{N_1}
  (e^{-s}\abs{z_2})^{N_2} \nonumber \\
  ~ & = C\abs{q} (e^{s}\abs{z_1})^{N_1-1} (e^{-s}\abs{z_2})^{N_2-1}
\end{align}

Choosing $N_1$ and $N_2$ large enough, and taking
$(\alpha,\beta)=(\gamma',\delta')$, $s=T$, this proves that each of
the terms between brackets~\eqref{equ:Lab} in tend to $0$ as
$\abs{q}\to 0$.

We consider now the integral term.  If $ |z_2| \leq |z_1|$, then
$T\leq 0$; letting $N_2=1$ in~\eqref{equ:flat-flat-bis} we get
\[
\forall s \in [T,0], \qquad \abs{(L_{\alpha\beta}r_1)(e^{s} z_1,
  e^{-s} z_2)} \leq C \abs{q} (e^{-\abs{s}}\abs{z_1})^{N_1-1}.
\]
If we suppose instead $ |z_1| \leq |z_2|$, we get similarly
\[
\forall s \in [0,T], \qquad \abs{(L_{\alpha\beta}r_1)(e^{s} z_1,
  e^{-s} z_2)} \leq C \abs{q} (e^{-\abs{s}}\abs{z_2})^{N_2-1}.
\]

Therefore we can write, for any bounded $(z_1,z_2)$ and any $N$,
\begin{equation}
  \abs{(L_{\alpha\beta}r_1)(e^{s}
    z_1, e^{-s} z_2)} \leq C \abs{q} (e^{-\abs{s}}\abs{z})^{N}, \quad \text{with
  } \abs{z}:=\max(\abs{z_1},\abs{z_2}).
  \label{equ:expo-estimate}
\end{equation}
Thus, with $N\geq \abs{k}+1$,
\[
\abs{\int_0^T (L_{\alpha \beta} r_1) (e^s z_1, e^{-s} z_2) e^{ks} ds }
\leq C'\abs{q} \int_0^{\abs{T}} e^{-s}ds \leq C'\abs{q}.
\]
This shows that this term tends to $0$ as $\abs{q}\to 0$ as well.

These estimates conclude the proof : our solution on $\C^2 \setminus
\{{q=0}\}$ extends to a smooth function on $\CM^2$. By continuity of
Poisson brackets, this extension is a solution to our cohomological
equation~\eqref{equ:eq-cohom-bis} on the whole space.
\end{proof}

\subsection{A Darboux lemma for focus-focus foliations}

Here again $\RM^4$ is endowed with the canonical symplectic form
$\omega_0$. Recall that the regular level sets of the map
$q=(q_1,q_2):\RM^4\to\RM^2$ are lagrangian for $\omega_0$.

\begin{prop}
  \label{prop:darboux}
  Let $\omega$ be a symplectic form on $\R^4$ such that

  \begin{itemize}
  \item[(a)] $\omega = \omega_0 + \error (\infty)$;
  \item[(b)] the regular level sets of $q$ are lagrangian for $\omega$;
  \item[(c)] for all $z\in\RM^4$,
    \[
    \int_{D_z} \omega-\omega_0 = 0,
    \]
    where $D_z$ is the disk given by
    \[
    D_z:=\{(\zeta z_1,\zeta z_2)\in\CM^2; \quad \zeta\in\CM,
    \abs{\zeta}\leq 1\}
    \]
    (here we identify $\RM^4$ with $\CM^2$ and denote
    $z=(z_1,z_2)\in\CM^2$).
  \end{itemize}
  Then there exists a local diffeomorphism $\Phi$ of $(\R^4,0)$ and
  $U=(U_1,U_2)$ a local diffeomorphism of $(\R^2,0)$ such that

  \begin{enumerate}
  \item $\Phi^* \omega = \omega_0$
  \item $\Phi^* q_1 = U_1(q_1,q_2)$
  \item $\Phi^* q_2 = U_2(q_1,q_2)$
  \item Both $\Phi$ and $U$ are infinitely tangent to the identity.
  \end{enumerate}

\end{prop}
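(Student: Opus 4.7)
The plan is to apply Moser's path method to the family
$\omega_t := \omega_0 + t(\omega-\omega_0)$, $t\in[0,1]$.
By (a), each $\omega_t$ is symplectic in a neighbourhood of the origin, and by (b), every regular level set of $q$ remains lagrangian for $\omega_t$. I seek $\Phi$ as the time-$1$ flow of a time-dependent vector field $X_t$ determined by $\imath_{X_t}\omega_t = -\beta$, where $\beta$ is a fixed 1-form with $d\beta = \omega - \omega_0$. Provided such a $\beta$ can be chosen flat at the origin, the standard Moser computation shows that $X_t$ is uniformly flat on $[0,1]$, its time-$1$ flow $\Phi$ is defined near the origin, infinitely tangent to the identity, and satisfies $\Phi^*\omega=\omega_0$.

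The payoff of a careful choice of $\beta$ is this: since each regular level set of $q$ is lagrangian for $\omega_t$, the Hamiltonian vector field $\ham{q_j}^{(t)}$ of $q_j$ relative to $\omega_t$ is tangent to the leaves, and therefore
\[
X_t q_j \;=\; dq_j(X_t) \;=\; -\omega_t\!\left(\ham{q_j}^{(t)},X_t\right) \;=\; -\beta\!\left(\ham{q_j}^{(t)}\right).
\]
Consequently, if $\beta$ vanishes on every regular leaf $L$ of $q$, then $X_t q_j\equiv 0$ for $j=1,2$ on the open dense set of regular points, and hence everywhere by continuity. In that case $\Phi^*q_j = q_j$ identically, so we may take $U=\mathrm{id}$ and conclusions (1)--(4) are immediate.

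The construction of $\beta$ is the heart of the argument. Since $\omega-\omega_0$ is closed and flat at the origin, a flat version of the Poincar\'e lemma (the standard cone homotopy preserves flatness) yields a flat 1-form $\beta_0$ with $d\beta_0 = \omega-\omega_0$. By (b), $\beta_0|_L$ is closed on every regular leaf $L$. Each such leaf is a cylinder whose first homology is generated by the $q_2$-orbit $\gamma_z = \partial D_z$, so hypothesis (c) combined with Stokes' theorem gives
\[
\int_{\gamma_z}\beta_0 \;=\; \int_{D_z} d\beta_0 \;=\; \int_{D_z}(\omega-\omega_0) \;=\; 0,
\]
i.e.\ $\beta_0|_L$ is exact on every regular leaf.

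The main obstacle is to assemble these leafwise primitives into a single smooth, flat function $\psi:\RM^4\to\RM$, for then $\beta:=\beta_0-d\psi$ has the required properties. I would define $\psi$ as an action integral along paths contained in the leaves of $q$, starting from a smooth transversal section of the regular $q$-foliation that I arrange to be invariant under the $S^1$-action generated by $q_2$; condition (c) yields path-independence within each regular leaf, and $S^1$-averaging in $q_2$ makes $\psi$ compatible with this action. To extend $\psi$ smoothly across the singular focus-focus leaf $\{z_1z_2=0\}$, I would reuse the Stokes/disk desingularisation already exploited to prove smoothness of the action integral $K$ in the proof of Theorem~\ref{theo:morse}: pull back through the map $j_z$ and integrate $j_z^\ast(\omega-\omega_0)$ over the fixed disk $D\subset\CM$. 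Flatness of $\psi$ at $0$ is inherited from that of $\beta_0$. With $\beta$ in hand, the Moser argument sketched above produces the desired $\Phi$, completing the proof with $U=\mathrm{id}$.
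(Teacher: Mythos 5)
Your Moser setup is exactly the paper's: with $\beta=(\alpha-\alpha_0)-d\psi$ your equation $\imath_{X_t}\omega_t=-\beta$ is the paper's equation \eqref{eq-Y_s}, your $r_j=\beta_0(X^0_j)$ are the paper's $r_j$, and requiring $\beta$ to vanish on the leaves is exactly requiring $d\psi(X^0_j)=r_j$, i.e.\ the cohomological system $\{\psi,q_1\}=r_1$, $\{\psi,q_2\}=r_2$. The gap is that you treat the existence of a \emph{smooth, flat} such $\psi$ as a routine construction (``assemble the leafwise primitives, desingularise with the disk trick''), whereas this is precisely the content of the division lemma, Theorem~\ref{theo:division} --- the paper's principal analytic result, whose proof occupies most of Section~4. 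The Stokes/disk desingularisation you invoke only handles the compact direction: it works for the action integral $K$ because the $S^1$-orbit $\gamma_z$ bounds the disk $D_z$ depending smoothly on $z$. But a leafwise primitive must also be integrated along the non-compact $q_1$-direction, from your transversal to the point $z$; the corresponding time is $T=\frac14\bigl(\ln|z_2|^2-\ln|z_1|^2\bigr)$, which diverges as $z$ approaches the singular planes $z_1=0$ or $z_2=0$. Smoothness and flatness of the resulting function across the singular fibre is exactly what fails to be obvious, and the paper obtains it only after first correcting $r_1$ (via Borel-summed functions $u_1,u_2$) so that it is flat on both planes, and then running the exponential estimates \eqref{equ:flat-flat}--\eqref{equ:expo-estimate} on the integral and its derivatives $L_{\alpha\beta}f$. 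Nothing in your sketch addresses this degeneration, so as written the ``heart of the argument'' is missing.

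The repair is cheap if you allow yourself the paper's toolbox: verify the cocycle relation $\{r_1,q_2\}=\{r_2,q_1\}$ (it follows from $d(\alpha-\alpha_0)$ vanishing on the pair $(X^0_1,X^0_2)$, equivalently from your observation that $\beta_0$ is leafwise closed), note that hypothesis (c) says the $S^1$-average of $r_2$ over $\gamma_z=\partial D_z$ vanishes, and then invoke Theorem~\ref{theo:division}, which delivers a flat $f$ with $\{f,q_1\}=r_1$ and $\{f,q_2\}=r_2-\phi_2(q_1,q_2)$ with $\phi_2=0$. That is the paper's proof; your leafwise-exactness reformulation of (c) is a nice geometric way to see why the obstruction $\phi_2$ vanishes, but it does not substitute for the division lemma itself.
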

 Notice that conditions 2. and 3. together mean that $\Phi$ preserves
  the (singular) foliation defined by the level sets of $q$. Notice
  also that the hypothesis (a),(b),(c) are in fact necessary~: for (a)
  and (b) this is obvious; for (c), remark that $\gamma_z:=\partial D_z$ is an
  orbit of the $S^1$-action generated by $q_2$ for the canonical
  symplectic form $\omega_0$, and thus is a
  homology cycle on the Liouville torus $q=\textup{const}$. Since
  $\Phi$ is tangent to the identity, $\Phi_*\gamma_z$ is homologous to
  $\gamma_z$; thus, if $\alpha_0$ is the Liouville 1-form on $\RM^4$,
  which is closed on the Liouville tori, we have
 \[
\int_{\gamma_z}\alpha_0 = \int_{\Phi_*(\gamma_z)}\alpha_0 = \int_{\gamma_z}\Phi^*\alpha_0,
\]
which by Stokes gives (c).
\begin{proof}[Proof of the proposition.]

  We use again the standard deformation method by Moser. Let
  \[
  \omega_s = (1-s)\omega_0 + s \omega.
  \]
  We look for $Y_s$ a time-dependant vector field defined for
  $s\in[0,1]$ whose flow $s\mapsto\varphi_{Y_s}^{s}$ satisfies
  $(\varphi_{Y_s}^{s})^* \omega_s = \omega_0$. Taking the derivative
  with respect to $s$ gives
  \[
  (\varphi_{Y_s}^{s})^* \left[ \frac{\partial \omega_s}{\partial s} +
    \mathcal{L}_{Y_s} \omega_s \right] = (\varphi_{Y_s}^{s})^* \left[
    \omega - \omega_0 + d(\imath_{Y_s}\omega_s) \right] = 0.
  \]
  $\omega$ and $\omega_0$ being closed, we can find, in a
  neighbourhood of the origin, smooth 1-forms $\alpha$ and $\alpha_0$
  such that $\omega = d\alpha$ and $\omega_0 = d\alpha_0$. Using the
  standard constructive proof of the Poincaré lemma, we can choose
  $\alpha$ and $\alpha_0$ such that $\alpha = \alpha_0 + \error
  (\infty)$. Let $\phy_{q_2}^t$ be the hamiltonian flow of $X^0_2$
  on $(\RM^4,\omega_0)$.

  Since $\omega_s(0)=\omega_0(0)=\omega_0$, one can find a
  neighbourhood of the origin on which $\omega_s$ is non-degenerate
  for all $s \in [0,1]$. This enables us to find a suitable $Y_s$ by
  solving
  \begin{equation}
    \label{eq-Y_s} 
    \imath_{Y_s} \omega_s = -(\alpha - \alpha_0) + df,
  \end{equation}
  for a suitable function $f$.  Here, any function $f$ such that
  $df(0)=0$ will yield a vector field $Y_s$ whose time-1 flow $\Phi$
  solves the point 1. of the lemma. It turns out that properly
  choosing $f$ will be essential in ensuring that $\Phi$ preserves the
  foliation ( point 2. and 3.).

  Let $X^0_1,X^0_2$ the hamiltonian vector fields associated to
  $q_1,q_2$ respectively, for $\omega_0$. Since the level sets of $q$
  are lagrangian for $\omega_0$, $X^0_1,X^0_2$ are commuting vector
  fields spanning the tangent space to regular leaves. Thus $\omega_0
  (X^0_1,X^0_2)=0$.  But, by assumption, the level sets of $q$ are
  lagrangian for $\omega$ as well. This implies that
  $\omega(X^0_1,X^0_2)=0$ as well. Thus $\omega_s(X^0_1,X^0_2)=0$ for
  all $s$~: the level sets of $q$ are lagrangian for $\omega_s$. This
  entails that the condition that $Y_s$ be tangent to the leaves can
  be written
  \begin{equation}
    \begin{cases}
      \omega_s(Y_s,X^0_1) = 0 \\
      \omega_s(Y_s,X^0_2) = 0.
    \end{cases}
    \label{equ:tangent}  
  \end{equation}
  We can expand this~:
  \[
  \eqref{equ:tangent} \Longleftrightarrow_{\eqref{eq-Y_s}}
  \begin{cases}
    -(\alpha - \alpha_0) (X^0_1) + df(X^0_1) = 0 \\
    -(\alpha - \alpha_0) (X^0_2) + df(X^0_2) = 0.
  \end{cases}
  \]
  Now we may let
  \[
  \begin{cases}
    r_1 := (\alpha - \alpha_0) (X^0_1) \\
    r_2 := (\alpha - \alpha_0) (X^0_2)
  \end{cases}
  \]
  and the condition becomes
  \[
  \eqref{equ:tangent} \Longleftrightarrow
  \begin{cases}
    \{f,q_1\} = r_1 \\
    \{f,q_2\} = r_2.
  \end{cases}
  \]
  (Here the Poisson brackets come from the canonical symplectic form
  $\omega_0$).  Notice that $r_1$ and $r_2$ are flat at the origin.

Next, recall the following formula for 1-forms : $d\alpha(X,Y) =
X\alpha(Y) - Y\alpha(X) - \alpha([X,Y])$. Thus
\[
0 = \omega_0 (X^0_1,X^0_2) = d\alpha_0(X^0_1,X^0_2) = X^0_1
\alpha_0(X^0_2) - X^0_2 \alpha_0(X^0_1) -
\alpha_0([X_1^0,X_2^0]),
\]
which implies
\[
\imath_{X^0_1} d(\alpha_0(X^0_2)) = \imath_{X^0_2} d(\alpha_0(X^0_1)).
\]
and similarly
\[
\imath_{X^0_1} d(\alpha(X^0_2)) = \imath_{X^0_2} d(\alpha(X^0_1)).
\]
Hence we may write the same equation again for $\alpha-\alpha_0$
which, in terms of $\omega_0$-Poisson brackets, becomes
\[
\{r_1,q_2\} = \{r_2,q_1\}.
\]
Therefore, a solution $f$ to this system~\eqref{equ:tangent} is
precisely given by the division lemma (Theorem~\ref{theo:division}),
provided we show that $r_2$ has vanishing $\phy_{q_2}^t$-average.
But, since $\frac{d}{dt}\phy_{q_2}^t = X_2^0(\phy_{q_2}^t)$, we have, 
\[
\forall z\in\RM^4, \quad \frac{1}{2\pi}\int_0^{2\pi}
r_2(\phy_{q_2}^t(z))dt = \int_{\gamma_z} \alpha-\alpha_0 =
\int_{D_z}\omega-\omega_0 = 0.
\]
Finally, we check that $Y_s$ as defined with \ref{eq-Y_s} vanishes at
the origin and hence yields a flow up to time 1 on a open
neighbourhood of the origin.

To conclude, the time-1 flow of $Y_s$ is a local diffeomorphism $\Phi$
that preserves the $q$-foliation and such that $\Phi^* \omega =
\omega_0$, which finishes the proof.

Notice that $Y_s$ is uniformly flat, whence $\Phi$ is a flat
perturbation of the identity.
\end{proof}

\section{ Proof of the main theorem }

We summarize here all the steps that bring us to prove
Theorem~\ref{theo-principal}.
\begin{align*}
  \begin{cases}
    F = \begin{pmatrix} f_1 \\ f_2 \end{pmatrix} \\
    \omega_0
  \end{cases} &
  \xrightarrow{\text{Lemma \ref{lemma-BirkP}}}
  \begin{cases}
    \chi^* F = G(q_1,q_2) + \error (\infty) \\
    \chi^* \omega_0 = \omega_0
  \end{cases}\\
&  \xrightarrow{\text{Theorem \ref{theo:morse}}}
  \begin{cases}
    \Upsilon^* G^{-1} (\chi^* F) = \begin{pmatrix} q_1 \\ q_2 \end{pmatrix} \\
    \Upsilon^* (\chi^* \omega_0) = \omega = \omega_0 + \error (\infty)
  \end{cases}\\
&  \xrightarrow{\text{Proposition \ref{prop:darboux}}}
  \begin{cases}
    \Phi^* \Upsilon^* G^{-1} (\chi^* F) = \begin{pmatrix} U(q_1,q_2) \\ U(q_1,q_2) \end{pmatrix} \\
    \Phi^* \Upsilon^* \chi^* \omega_0 = \omega_0
  \end{cases}
\end{align*}

Only the last implication needs an explanation~: indeed, even if the
Morse lemma is not symplectic, the initial foliation by $F$ is
lagrangian for $\omega_0$, and this implies that, under $\Upsilon$,
the target foliation by $q$ becomes lagrangian for the target
symplectic form $\omega$. Thus the hypothesis (a) and (b) of the
Darboux lemma~(Proposition \ref{prop:darboux}) are satisfied. That (c)
is also satisfied follows from the equivariance property
$(\mathcal{P})$ of Theorem~\ref{theo:morse}. Indeed, let $\alpha_0$ be
the Liouville 1-form in $\RM^4$, and
$\alpha:=\Upsilon^*\alpha_0$. Since $\Upsilon$ commutes with
$\phy_{q_2}^t$, we have
\[
\lie_{X_2^0}\alpha = \lie_{X_2^0}\Upsilon^*\alpha_0 = \Upsilon^*
 \lie_{X_2^0}\alpha_0.
\]
On the other hand, since $\imath_{X^0_2} d\alpha_0 = -dq_2$ and
(Lemma~\ref{lemm:homogeneous}) $d\imath_{X^0_2} \alpha_0 = dq_2$, we
get $\mathcal{L}_{X^0_2} \alpha_0 = 0$. Thus $\lie_{X_2^0}\alpha=0$
which, in turn, says that $d\imath_{\ham{q_2}^\omega} \alpha =
-\imath_{\ham{q_2}^\omega} d\alpha = dq_2$, where we denote by
$\ham{q_2}^\omega$ the $\omega$-gradient of $q_2$.  By property
$(\mathcal{P})$, $\ham{q_2}^\omega= X_2^0$, so $d\imath_{X_2^0} \alpha
= dq_2$.  Hence $\imath_{X^0_2} \alpha = q_2 + \beta$, where $\beta$ is
a constant, which is actually equal to 0 since $\imath_{X^0_2} \alpha
= q_2 + \error(\infty)$. We thus get
$\imath_{X_2^0}(\alpha-\alpha_0)=0$, which of course implies
\[
\int_{\gamma_z} \alpha-\alpha_0 = 0.
\]

Thus one may apply Proposition \ref{prop:darboux}, and the main
theorem~\ref{theo-principal} is shown for $\Psi := \Phi \circ \Upsilon
\circ \chi$ and $\tilde{G} := G \circ U$.

\bibliographystyle{plain}%
\bibliography{bibli-utf8}
\end{document}